\newcommand*\Let[2]{\State #1 $\gets$ #2}
\tikzset{every picture/.style={line width=0.5pt}} 
\font\bigbold=cmbx12
\def\maketitlefull#1#2#3#4#5#6{
  \centerline {\bigbold #1}
  \smallskip
  \centerline {\bigbold #2}
  \medskip
  \centerline {\tensc #3}
  \medskip
  \centerline {\sl #4}
  \smallskip
  \centerline {\sl #5}
  \smallskip
  \centerline {\sl #6}
  \bigskip
  }
\def\maketitle#1{
  \centerline {\bigbold #1}
}
\titleformat{\section}{\bf}{\thesection.}{0.5em}{}
\titlespacing{\section}{0em}{1.5\bigskipamount}{\medskipamount}
\titleformat{\subsection}{\bf}{\thesubsection.}{0.5em}{}
\titlespacing{\subsection}{0em}{\bigskipamount}{\medskipamount}
\titleformat{\paragraph}[runin]{\bf}{}{0em}{}[.]
\titlespacing{\paragraph}{0em}{\medskipamount}{*0.75}
\font\tensc=cmcsc10
\def\case#1. {{\it Case #1}.\enspace}
\def\parlabel#1. {\medskip\noindent{\bf #1.}\enspace}
\def\sqr#1#2{{\vcenter{\vbox{\hrule height.#2pt
        \hbox{\vrule width.#2pt height#1pt \kern#1pt
          \vrule width.#2pt}
        \hrule height.#2pt}}}}
\def\slug{\quad\hbox{\kern1.5pt\vrule width2.5pt height6pt depth1.5pt\kern1.5pt}\medskip}
\setlist[enumerate]{itemsep=\smallskipamount,parsep=0pt,label={\rm \roman*)}}
\setlist[itemize]{itemsep=\smallskipamount,parsep=0pt}
\newtheorem{thm}{Theorem}
\newtheorem{cor}[thm]{Corollary}
\newtheorem{prop}[thm]{Proposition}
\newtheorem{lem}[thm]{Lemma}
\newtheorem*{lem*}{Lemma}
\newtheorem{conj*}{Conjecture}
\theoremstyle{definition}
\theoremstyle{remark}
\newtheorem{rem}{Remark}
\newcommand{\ind}[1]{\1_{[#1]}}
\newcommand{\E}{\mathbb{E}}
\renewcommand{\P}{\mathbb{P}}
\newcommand{\R}{\mathbb{R}}
\newcommand{\1}{\mathbbm{1}}
\newcommand{\V}{\mathcal{V}}
\DeclarePairedDelimiterX{\inp}[2]{\langle}{\rangle}{#1, #2} 
\newcommand{\no}{\noindent}
    \DeclareMathAlphabet{\mathcalligra}{T1}{calligra}{m}{n}
    \DeclareFontShape{T1}{calligra}{m}{n}{<->s*[2.2]callig15}{}
\renewcommand{\mathbb}{\mathbf}
\renewcommand{\mathbbm}{\mathbf}
\renewcommand{\V}{\mathbf{V}}
\newcommand{\Bin}{\mathrm{Bin}}
\begin{document}

\maketitlefull{Density estimation using cellular binary trees}{and an application to monotone densities}{Luc Devroye and Jad Hamdan}{School of Computer Science, McGill University}{and}{Department of Mathematics, Oxford University}
\medskip

\[
  \vbox{
    \noindent{\bf Abstract.}\enskip 
    Consider a density $f$ on $[0,1]$ that must be estimated from an i.i.d.\ sample $X_1,...,X_n$ drawn from $f$. 
    In this note, we study binary-tree-based histogram estimates
    that use recursive splitting of intervals.
    If the decision to split an interval is a (possibly randomized) function of the
    number of data points in the interval only, then we speak of an estimate of complexity one. We exhibit a universally consistent estimate of complexity one. If the decision to split is a function of the cardinalities of $k$ equal-length sub-intervals, then we speak of an estimate of complexity $k$. We propose an estimate of complexity two that can estimate any bounded monotone density on $[0,1]$ with optimal expected total variation error $O(n^{-1/3})$.
    \smallskip 
    
    \noindent{\bf Keywords.} Density estimation, monotone densities, nonparametric estimation, cellular computation, binary trees, Galton--Watson trees.
    
    \noindent{\bf MSC2020 subject classification.} Primary 62G07, 68Q87, 68W40; secondary 60C05, 60J85.
  }
\] 

\smallskip

\section{Introduction} \label{sec:intro}
\no 

We are concerned with the estimation of an unknown density $f$ on $[0,1]$ based on an i.i.d.\ sample $X_1,...,X_n$ drawn from it.
In particular, given a recursive partition of the space into intervals, one can simply use the partition-based histogram estimate: on a fixed interval $C$, $f$ is estimated by

    \[
        f_n(x)=\frac{N(C)}{n\lambda(C)}, \quad x\in C,
    \]
    where $C$ is the unique interval to which $x$ belongs, $\lambda(C)$ is the length (or Lebesgue measure) of $C$, and $N(C)$ is the number of $X_i$'s falling in $C$.
We impose two further design restrictions, one for convenience, and one motivated by distributed computation. 

The only partitions of $[0,1]$ allowed are dyadic. This lets us view each of the allowed partitions as a binary tree. The root represents $[0,1]$, the two children of the root represent $[0,1/2]$ and $[1/2,1]$, and at level $i$ in the tree, we have an equi-partition of $[0,1]$ into $2^i$ intervals of length $1/2^i$. The leaves in the tree correspond to intervals whose union is $[0,1]$. (We abuse the term partition, as we allow intervals to overlap in a border point.)  To estimate $f$, it suffices to construct this tree given the data.   The data mining community (Schmidberger, 2009 \cite{gabi}; Ram and Gray, 2011 \cite{ramgray}; Anderlini, 2016 \cite{anderlini}) refers to this general method as ``density estimation trees". We note that in the event where the tree is not full, the resulting partition's bins can have different lengths.

Our second restriction is motivated by distributed computation. At the root, we may decide to either split the root interval or make it a leaf.  If it is split, the data travel to their respective sub-trees. So, the data flow down the tree according to where they belong.  Every node in the tree must act similarly, so we will refer to nodes as cells. The decision to split has a complexity parameter $\kappa$.  When $\kappa = 1$, the decision can only depend upon $N$, the number of points that fall in the node's interval, $C$.  In particular, that decision can't depend upon the original $n$, and can be handled by an autonomous computer, or ``cell". For $\kappa > 1$, the decision only depends upon $N_1,\ldots, N_k$, the cardinalities of the $k$ equal-length sub-intervals of $C$.  This view was also eschewed by Biau and Devroye (2013) \cite{devroyebiau}.

Many things can go awry. For one thing, we could end up with an infinite tree without any leaves.  Or we could end up with just one node, the root.  In the former case, the estimate is not defined. In the latter case, one gets consistency only if $f$ itself is uniform $[0,1]$.  The study here is a small first step into this new  model of ``cellular" estimators (meaning those which satisfy our restrictions), noting that generalizations in many directions are possible. Most importantly, one could consider $2^d$-ary trees to partition $[0,1]^d$ for general $d$. Computing will likely become more distributed and miniaturized, making our model relevant.

We begin with the description of an estimate of complexity one that is universally consistent, i.e., for all densities $f$ on $[0,1]$,
$$
\int | f_n - f | \to 0
$$
in probability as $n \to \infty$.

Then we exhibit an estimate of complexity two that consistently estimates every bounded monotone decreasing density on $[0,1]$ at an optimal rate in $n$.
The decision to split is made when $N_1-N_2>\gamma \sqrt{N_1+N_2}$ for fixed universal design constant $\gamma > 0$. We will show that
\[
    \E\bigg\{\int_{0}^1|f_n(x)-f(x)|\,dx\bigg\} = O\bigg(\frac{B^{2/3}}{n^{1/3}}\bigg)
\]
where $B=f(0)$ is the value at the mode, and $f_n$ is the histogram estimate on the partition induced by the leaves. Also, when $B=\infty$, we still have consistency, i.e., $\E\{\int|f_n-f|\}\to 0$ as $n\to \infty$.

It is noteworthy that if $\mathcal{M}_B$ denotes the class of all monotone densities on $[0,1]$ bounded by $B$, then
\[
    \inf_{\mbox{all estimators $f_n$}}\,\,\sup_{f\in \mathcal{M}_B} \E\bigg\{\int_0^1|f_n(x)-f(x)|\,dx\bigg\} \geq \alpha\left(\frac{\log(1+B)}{n}\right)^{1/3}
\]
for a universal constant $\alpha>0$ (Birgé, 1983 \cite{birge}; see also Devroye and Györfi, 1985 \cite{l1view}). Our estimate achieves the minimax rate in $n$, albeit with a sub-optimal constant multiplicative factor. Several estimates achieve the minimax rate with the correct multiplicative factor, notably Grenander's histogram estimate (Grenander, 1956 \cite{Grenander}) (which uses a partition based on the smallest concave majorant of the distribution function) and Birg\'e's histogram estimate (Birg\'e, 1983 \cite{birge}) (which uses a partition of exponentially increasing widths). Standard histogram estimates of equal bin widths can at best achieve a rate proportional to $(B/n)^{1/3}$.

Our simple estimate does not use additional information that one may know about the density. Prior knowledge of the smoothness, for instance, is completely ignored. With explicit knowledge of smoothness in terms of H\"older coefficient $\beta\geq 1$, the minimax rate can be of the order of $n^{-\beta/(2\beta+1)}$, surpassing $n^{-1/3}$ if $\beta>1$. This rate can be achieved with the kernel density estimate (Wasserman, 2006 \cite{allofnps}). 

This paper aims to present the most straightforward and most general estimate using the design restriction outlined above, to showcase what can still be achieved despite this hurdle. For this reason, we do not give much importance to the smoothness issue discussed in the previous paragraph.  After stating the main results announced above, we discuss the size of the tree obtained for the monotone density estimate and address the computational complexity. We note the importance of Galton--Watson trees in the analysis: as every density is locally nearly uniform, the performance of our splitting rule on a uniform density $f$  explains the behaviour near the bottom of the tree. We will show for example that for uniform $f$, the binary tree is essentially an extinct Galton--Watson tree of constant expected size.
We end the paper with extensions of the design principle to estimate densities with special structures such as convex or concave densities, log-concave or log-convex densities, and unimodal densities spring to mind. 


\section{A universally consistent estimate of complexity one}

Any deterministic splitting rule of complexity $\kappa=1$ is doomed because one can't decide which number of points $N(C)$ in a given interval is large enough to stop splitting. However, randomization can be used in the design. Assume that we have a non-increasing function $\varphi: Z \to [0,1]$, and that our estimate of complexity one is:
$$
\hbox{\rm do not split $C$ when}~U \le \varphi (N),
$$
where $N = N(C)$ is the cardinality of the interval $C$ and $U$ is an independent uniform $[0,1]$ random variable.
In this case, we obtain universal consistency:

\begin{thm}\label{nulltheorem} Let $f$ be a probability density function on $[0,1]$. Then
    $$
    \int_0^1 | f_n - f| \to 0
    $$
in probability as $n \to \infty$,
provided that
$$
\lim_{n \to \infty} \varphi(n) = 0
$$
and
$$
\lim_{n \to \infty} \varphi(n) \log_2 (n) = \infty.
$$
\end{thm}

\medskip
Proved in the Appendix, Theorem \ref{nulltheorem}
raises new questions regarding the tree's size, which measures the total computation time. One would also need information on the expected number of steps required to find the partition to which a point $x$ belongs, as that would be proportional to the expected time to compute the density estimate at one point. In addition, the height of the tree would be of interest. Finally, the choice of $\varphi$ within the bounds outlined in Theorem \ref{nulltheorem} should be studied.

    
\section{An estimate of complexity two for monotone densities}
        
        When $f$ is monotone on $[0,1]$ and non-increasing, an interval $C$ with equal-length sub-intervals $C_1$ and $C_2$ of cardinalities $N_1$ and $N_2$ 
        can be split by the following rule of complexity two:
        $$
        \hbox{\rm split $C$ when}~
        N_1-N_2>\gamma \sqrt{N_1+N_2}
        $$
        for fixed universal design constant $\gamma > 1$, noting that one would expect $N_1\geq N_2$ by monotonicity of $f$.
        For the resulting estimate $f_n$ we obtain an explicit upper
        bound on the total variation error:
    
\begin{thm}\label{maintheorem} Let $f$ be a bounded non-increasing probability density function on $[0,1]$ with $B=f(0)$. Then
    \[\sup_{f\in \mathcal{M}_B}  \E\bigg\{\int_{0}^1|f_n(x)-f(x)|dx\bigg\} \leq \beta  \left(\frac{B^{2/3}}{n^{1/3}}\right)\]
     for a universal constant $\beta$ and large enough $n$.
\end{thm}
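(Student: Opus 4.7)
The strategy is to decompose the $L^1$ error via the triangle inequality on the random partition $\mathcal{P}$ produced by the cellular rule, to bound bias and variance \emph{simultaneously} in terms of $|\mathcal{P}|$, and finally to bound $\E|\mathcal{P}|$ by coupling $\mathcal{P}$ to a deterministic Birg\'e-style oracle partition and invoking the Galton--Watson analysis of Section 4.

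Setting $\bar f_n(x):=p_{C(x)}/\ell_{C(x)}$, where $p_C:=\int_C f$ and $C(x)\in\mathcal{P}$ is the leaf containing $x$, I would start from
\[
\int_0^1 |f_n-f|\,dx \;\leq\; \sum_{C\in\mathcal{P}}\Bigl|\tfrac{N(C)}{n}-p_C\Bigr| \;+\; \sum_{C\in\mathcal{P}}\int_C|\bar f_n-f|\,dx.
\]
The first (stochastic) sum has expected value at most $\E\sum_C\sqrt{p_C/n}\leq\sqrt{\E|\mathcal{P}|/n}$ by the binomial mean-absolute-deviation bound and Cauchy--Schwarz (with a standard argument to control the data-dependence of $\mathcal{P}$). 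For the second (bias) sum, the key per-cell inequality for monotone $f$ is
\[
\int_C|\bar f_n-f|\,dx \;\leq\; 2(p_{C'}-p_{C''}),
\]
where $C',C''$ are the dyadic halves of $C$. To see this, let $F$ be the CDF of $f$ and $\hat F$ its linear interpolant at the endpoints of $C$; then $\int_C|\bar f_n-f|=2\max_C(F-\hat F)$, and since $F-\hat F$ is concave on $C$ and vanishes at the endpoints, an elementary concavity argument yields $\max_C(F-\hat F)\leq 2(F(m_C)-\hat F(m_C))=p_{C'}-p_{C''}$.

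This aligns the bias with the splitting rule. At every leaf, $N(C')-N(C'')\leq\gamma\sqrt{N(C')+N(C'')}$ transfers, via a Bernstein-type concentration bound and a union bound over the at-most polynomially many dyadic cells examined, into the population statement $p_{C'}-p_{C''}\leq c_1\sqrt{p_C/n}$, on a good event $\Omega$ of probability $1-o(n^{-1/3})$. Summing over leaves and applying Cauchy--Schwarz, $\sum_{\text{leaf}\,C}(p_{C'}-p_{C''})\leq c_1\sqrt{|\mathcal{P}|/n}$, so on $\Omega$ both bias and variance are $O(\sqrt{|\mathcal{P}|/n})$. Off $\Omega$ the error is trivially at most $2$, contributing $o(n^{-1/3})$. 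Hence $\E\int|f_n-f|\lesssim\sqrt{\E|\mathcal{P}|/n}+o(n^{-1/3})$.

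The remaining---and principal---task is to show $\E|\mathcal{P}|=O(n^{1/3}B^{4/3})$. My plan is to couple $\mathcal{P}$ with a deterministic \emph{oracle} dyadic partition $\mathcal{P}^*$, the smallest dyadic partition whose every cell $D$ already satisfies the population stopping inequality $(p_{D'}-p_{D''})^2\leq c_2\,p_D/n$ computed from its own halves. A Birg\'e-style calibration using the monotonicity of $f$ and the cap $f\leq B$, adapted to the dyadic scale (depth cutoffs replacing geometric spacing), should give $|\mathcal{P}^*|\leq Cn^{1/3}B^{4/3}$. The reverse direction of the concentration argument shows that, on $\Omega$, the cellular rule splits every cell strictly above $\mathcal{P}^*$, so $\mathcal{P}$ is a refinement of $\mathcal{P}^*$; below each oracle cell the density is approximately uniform, and by Section 4 the resulting sub-tree is stochastically dominated by a subcritical Galton--Watson tree of constant expected total progeny. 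Summing over oracle cells gives $\E|\mathcal{P}|=O(|\mathcal{P}^*|)$. The two points I expect to be hardest are the quantitative Birg\'e calibration on the dyadic scale, and the uniformity of the Galton--Watson domination across oracle cells whose local densities differ by large factors.
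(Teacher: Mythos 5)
The decomposition into stochastic and bias terms and the per-cell inequality $\int_C|\bar f_n-f|\leq 2(p(C')-p(C''))$ (the paper's Lemma~\ref{lem4}, which you reprove nicely via concavity of $F-\hat F$) are all correct and match the paper's starting point. The fatal gap is the claimed good event $\Omega$, on which every leaf satisfies the population balance condition $p(C')-p(C'')\leq c_1\sqrt{p(C)/n}$, with $\P(\Omega^c)=o(n^{-1/3})$. This is false for any constant $c_1$. Consider a cell $C$ with $np(C)$ large and $p(C')-p(C'')$ slightly above $c_1\sqrt{p(C)/n}$: the CLT gives $\P\{N(C')-N(C'')\leq\gamma\sqrt{N(C)}\}\approx\Phi\big(\gamma-(p(C')-p(C''))\sqrt{n/p(C)}\big)\approx\Phi(\gamma-c_1)$, a strictly positive \emph{constant}, so such cells become leaves with probability bounded away from zero. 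There are $\Theta\big(B^{1/3}n^{1/3}\big)$ mildly unbalanced cells (cf.~Lemma~\ref{lem7}), and their ``failure'' events are effectively independent, so $\P(\Omega^c)\to 1$, not $o(n^{-1/3})$. Pushing $c_1\sim\sqrt{\log n}$ to rescue the union bound only trades the error for a spurious $\sqrt{\log n}$ factor in the bias. The paper avoids this trap entirely: rather than a dichotomy, Lemma~\ref{lem10} gives a Chebyshev--Cantelli bound $\P\{C\in L\}\lesssim 2p(C)/(2p(C)+n\xi(C)^2)$ on the leaf probability that decays as the excess unbalancedness $\xi(C)$ grows, and the identity $(a+b)/(1+b^2)\leq\sqrt{a^2+1}$ then shows that the \emph{product} (per-cell bias)$\times$(leaf probability) is uniformly $O(\sqrt{p(C)/n})$ across the whole unbalancedness scale, with no union bound.

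Two secondary issues. First, your calibration $\E|\mathcal{P}|=O(B^{4/3}n^{1/3})$ is off by a factor $B$: Lemma~\ref{lem7} gives $|\mathcal{B}^c|=O(B^{1/3}n^{1/3})$, and the tree size is of the same order. The fact that this \emph{under}shoots what your Cauchy--Schwarz step needs to produce the target $B^{2/3}n^{-1/3}$ is a symptom that the bias is not really governed by $\sqrt{|\mathcal{P}|/n}$; in the paper, the $B^{2/3}$ scaling comes from the deterministic truncation term $B/2^{\ell^*+1}=\Theta(\gamma^{2/3}B^{2/3}n^{-1/3})$ in Proposition~\ref{prop11}, not from a leaf-count argument. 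Second, the treatment of the stochastic sum is not a ``standard argument'': because $\mathcal{P}$ is data-dependent, the paper needs the $\mathcal{P}_j$ stratification (Lemma~\ref{lem8}, $|\mathcal{P}_j|\leq 2^j(|\mathcal{B}^c|+1)$) together with the geometric decay $c_2(\gamma)^{j/2}$ of the leaf-probability over $j$ balanced ancestors (Lemma~\ref{lem12}), and the whole argument only closes when $2c_2(\gamma)<1$, forcing $\gamma>2$. Your Galton--Watson domination intuition is pointing in the right direction, but it is this explicit subcriticality condition, not a generic coupling to an oracle partition, that makes the sum converge.
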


\begin{rem}
Note that if we partition $[0,1]$ into $k$ equal intervals, and let $f_n$ be the standard histogram estimate for these $k$ intervals, then 
\[
    \E\bigg\{\int_{0}^1 |f_n-f| \bigg\} \leq \E\bigg\{\int_0^1 |f_n-\E f_n| \bigg\} + \int_0^1 |\E f_n-f|,
\]  
where $\E\{f_n(x)\}=p(C)/\lambda(C)$, $x\in C$, and $p(C)=\int_C f$. Thus, $\E\{f_n(x)\}=kp(C)$.
A simple shifting argument shows that
\[
    \int_0^1 |\mathbb{E}f_n-f| \leq \frac{B}{k}.
\]
Also, 
\begin{align*}
  \E\bigg\{\int_0^1|f_n-\E f_n|\bigg\}&=\sum_{C}\E\bigg\{\bigg|\frac{N(C)}{n}-p(C)\bigg|\bigg\}\\
  &\leq \sqrt{\sum_{C}1\cdot\sum_{C} \E\bigg\{\bigg|\frac{N(C)}{n}-p(C)\bigg|^2\bigg\}}\\ &\leq \sqrt{\frac{k}{n}\sum_{C}p(C)} = \sqrt{\frac{k}{n}},
\end{align*}
and therefore, taking $k=\lceil (2B)^{2/3}n^{1/3}\rceil$ to optimize the sum, we obtain
\[
    \sup_{f\in \mathcal{M}_B} \E\bigg\{\int_0^1 |f_n-f|\bigg\} \leq \left(\frac{1}{2^{2/3}}+2^{1/3}+o(1) \right)\left(\frac{B}{n}\right)^{1/3}.
\]
Note that without knowledge of $B$, the histogram estimate does not have a better convergence rate than our handicapped estimate.

\end{rem}


\section{Monotone density estimate: algorithm and time complexity}

From an algorithmic standpoint, the splitting described above amounts to a branching process that constructs a binary tree. For any $x\in \R$ and sorted list of numbers $L$, let $i(L, x)$ denote the index of $x$ if it were inserted into $L$. Given a sorted list of size $n$ whose elements are an i.i.d. sample $X_1,..., X_n$ drawn from an unknown density $f$ on $[a,b]\subseteq \mathbb{R}$, the following recursive algorithm constructs a partition tree of $[a,b]$ according to our splitting rule. 
\begin{algorithm}
  \caption{Interval partitioning using a binary tree}
  \begin{algorithmic}[1]\label{algo}
    \Statex
    \Function{BuildTree}{$r$, $[X_1,...,X_n], [a,b]$} \Comment{$r$ is a tree node}
      \Let{$L$}{$i([X_1,..., X_n], (a+b)/2)$} \Comment{$L$ is the number of data points in the left half of $[a,b]$} 
      \Let{$R$}{$n-L$} \Comment{$R$ is the number of data points on the right half of $[a,b]$}
        \If{$L-R > \gamma\sqrt{n}$} \Comment{$\gamma$ is a parameter in $(0, \infty)$}
             
             \State \Call{BuildTree}{$r$.left, $[X_1,...,X_{L}], [a, (a+b)/2]$}
             
             \State \Call{BuildTree}{$r$.right, $[X_{L+1},...,X_{n}], [(a+b)/2, b]$}
        \Else 
            \Let{$r$.value}{$[a,b]$}
        \EndIf
    \EndFunction
    
    \State Initialize new tree node $r$ 
    \State \Call{BuildTree}{$r, [X_1,...,X_n], [a,b]$}
    \State \Return{$r$}
  \end{algorithmic}
\end{algorithm}

While not strictly necessary, the assumption that $[X_1,..., X_n]$ is sorted allows us to decide whether or not to split in logarithmic time by using binary search to compute the number of points on the left and right halves of $[a,b]$. It also greatly simplifies the algorithm's pseudo-code to construct left and right sub-lists when we perform a recursive call.

As a corollary to the results shown later in the paper, we can derive the following sub-linear upper bound on the expected runtime of our algorithm.
\begin{cor}\label{cor2}
    This algorithm's expected runtime is $O(n^{1/3}\log_2(n))$ if the input data are sorted.
\end{cor}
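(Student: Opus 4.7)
The plan is to bound the expected total runtime as (per-node work) times (expected number of nodes in the recursion tree), since both factors admit clean estimates.

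First I would bound the per-node work. At each call to \textsc{BuildTree} on a sorted sublist of length $m \leq n$, the only non-trivial operation is the binary search used to compute $L = i([X_1,\ldots,X_m], (a+b)/2)$, which runs in $O(\log m) = O(\log_2 n)$ time. All other bookkeeping (producing ``left'' and ``right'' sublists) can be implemented by passing index pairs into the global sorted array rather than physically copying, so no extra linear-time cost is incurred. Hence the per-call cost is $O(\log_2 n)$ in the worst case, regardless of which node we are at.

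Second, I would bound the expected number of recursive calls. The recursion produces a rooted binary tree, so if $L$ denotes the number of leaves (equivalently, the cells of the output partition), the total number of nodes is at most $2L-1$. It therefore suffices to show $\E[L] = O(n^{1/3})$. Here I would invoke the Galton--Watson picture announced in Section $4$: once the recursive subdivision produces cells small enough that $f$ is nearly uniform on them, the subtree rooted at each such cell is essentially a subcritical Galton--Watson tree, with constant expected total progeny. A separate bookkeeping argument, of the same flavour as the one driving the rate in Theorem $\ref{maintheorem}$, shows that the number of top-level cells needed to reach this ``uniform regime'' is itself $O(n^{1/3})$. Summing the constant expected contribution over those cells gives $\E[L] = O(n^{1/3})$.

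Combining the two pieces yields $\E[\text{runtime}] \leq O(\log_2 n) \cdot \E[N] = O(n^{1/3} \log_2 n)$, as claimed. The main obstacle is the bound $\E[L] = O(n^{1/3})$: it is \emph{not} an immediate corollary of the $L_1$ rate in Theorem $\ref{maintheorem}$, since the variance-type estimate $\sum_C \sqrt{p(C)/n} \leq \sqrt{L/n}$ goes in the wrong direction to extract a bound on $L$. Getting the leaf count therefore requires the detailed analysis of where the recursion actually terminates, supplied by the Galton--Watson machinery of Section $4$ and the partition analysis of Sections $5$--$6$.
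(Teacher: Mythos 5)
Your proposal is correct and follows essentially the same route as the paper: bound the per-call cost by $O(\log_2 n)$ via binary search, reduce the problem to showing $\E[|L|]=O(n^{1/3})$, and obtain this by splitting the leaves into the $O(n^{1/3})$ unbalanced nodes (the paper applies Lemma~\ref{lem7} to $(\mathcal{B}^{(1/2)})^c$) and the balanced nodes, whose contribution is summable because of the subcritical Galton--Watson-type geometric decay in $j$ (formalized in the paper via Lemmas~\ref{lem8} and~\ref{lem12} rather than the uniform-case heuristic of Section~4). Your closing remark that the leaf-count bound does not follow from the $L_1$ rate itself is exactly the reason the paper must invoke the combinatorial lemmas of Sections~5--6.
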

\begin{proof}
    See appendix.
\end{proof}


\section{Monotone density estimate: Galton--Watson trees and the uniform case}

We recall the definition of a Galton--Watson tree (see, e.g., Athreya and Ney, 1972 \cite{athreya:ney}): the number of offspring of each node in the tree is random and distributed as $Z$, where $Z\geq 0$ has a fixed distribution. All realizations of $Z$ are independent. If $\mathbb{E}Z=m<1$, then the expected size of the tree is $1/(1-m)$. See, e.g., Lyons and Peres, 2016 \cite{peres}.

As previously discussed, our splitting procedure can be viewed as a (randomly generated) binary tree of intervals which we will henceforth denote by $T_n$. An elegant connection to the theory of branching processes can be established when our data are sampled from a uniform distribution. More specifically, one can show that the resulting tree would closely resemble a Galton--Watson tree whose nodes have two children with probability $p_2= \P\{\mathcal{N}(0,1)>\gamma\}:=\Phi(\gamma)$ and no children with probability $p_0=1-p_2$ (where $\mathcal{N}(0,1)$ is a standard normal and $\gamma$ is the parameter chosen in the algorithm, which is assumed to be $\geq 1$ in this section).

Let $C$ be an arbitrary sub-interval of $[0,1]$ and assume that $C$ contains $N$ data points. The number of points in the left and right halves of $C$, denoted by $N_1$ and $N_2$, respectively, are binomial random variables with parameters $N$ and $1/2$. Noting that $2N_1-N=N_1-N_2$, the probability of splitting the interval is
\[
    \P\{N_1-N_2>\gamma \sqrt{N}\} = \P\bigg\{\frac{N_1-N/2}{\sqrt{N/4}}> {\gamma}\bigg\},
\]
which by the Berry-Esseen theorem (Berry, 1941 \cite{berry}, see also Petrov, 1975 \cite{petrov}) is equal to $\Phi(\gamma)+\theta/\sqrt{N}=:p_2$ for some $|\theta|\leq 1$. 

Let $\epsilon > 0$ be arbitrary, and let $T'_n$ be the subtree of $T_n$ in which all nodes $C$ (we refer to $C$ as a node as well as an interval associated with that node) contain at least $N_\epsilon := \lceil 1/(\epsilon\cdot \Phi(\gamma))^2\rceil$ points. Then for these nodes, the probability $p_2$ of splitting is smaller than
\[
   (1+\theta\cdot\epsilon)\Phi(\gamma).
\]
 We infer that for $\epsilon$ small enough,
\[
    \E\{|T'_n|\} \leq \frac{1}{1-2(1+\epsilon)\Phi(\gamma)}.
\]

Furthermore, every leaf of $T'_n$ is either a leaf of $T_n$ or an internal node containing less than $N_\epsilon$ points. In the latter case, we can derive a uniform upper bound for the expected size of sub-trees that hang from such leaves as a function of $\epsilon$.
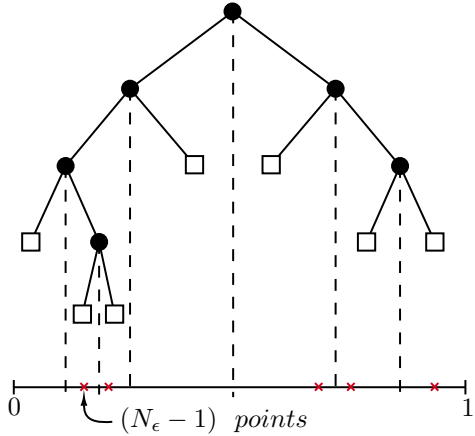
\begin{figure}
    \centering
    
\tikzset{every picture/.style={line width=0.75pt}} 

\begin{tikzpicture}[x=0.75pt,y=0.75pt,yscale=-.65,xscale=.65]

\draw    (150,323) -- (500.43,323) ;
\draw [shift={(500.43,323)}, rotate = 180] [color={rgb, 255:red, 0; green, 0; blue, 0 }  ][line width=0.75]    (0,5.59) -- (0,-5.59)   ;
\draw [shift={(150,323)}, rotate = 180] [color={rgb, 255:red, 0; green, 0; blue, 0 }  ][line width=0.75]    (0,5.59) -- (0,-5.59)   ;
\draw  [fill={rgb, 255:red, 0; green, 0; blue, 0 }  ,fill opacity=1 ] (234,91.21) .. controls (234,87.78) and (236.78,85) .. (240.21,85) .. controls (243.65,85) and (246.43,87.78) .. (246.43,91.21) .. controls (246.43,94.65) and (243.65,97.43) .. (240.21,97.43) .. controls (236.78,97.43) and (234,94.65) .. (234,91.21) -- cycle ;
\draw    (320.21,31.21) -- (240.21,91.21) ;
\draw  [fill={rgb, 255:red, 0; green, 0; blue, 0 }  ,fill opacity=1 ] (184,151.21) .. controls (184,147.78) and (186.78,145) .. (190.21,145) .. controls (193.65,145) and (196.43,147.78) .. (196.43,151.21) .. controls (196.43,154.65) and (193.65,157.43) .. (190.21,157.43) .. controls (186.78,157.43) and (184,154.65) .. (184,151.21) -- cycle ;
\draw    (240.21,91.21) -- (190.21,151.21) ;
\draw    (190.21,151.21) -- (163.21,210.21) ;
\draw  [fill={rgb, 255:red, 255; green, 255; blue, 255 }  ,fill opacity=1 ] (156.5,203.5) -- (169.93,203.5) -- (169.93,216.93) -- (156.5,216.93) -- cycle ;
\draw  [fill={rgb, 255:red, 0; green, 0; blue, 0 }  ,fill opacity=1 ] (210,210.21) .. controls (210,206.78) and (212.78,204) .. (216.21,204) .. controls (219.65,204) and (222.43,206.78) .. (222.43,210.21) .. controls (222.43,213.65) and (219.65,216.43) .. (216.21,216.43) .. controls (212.78,216.43) and (210,213.65) .. (210,210.21) -- cycle ;
\draw    (190.21,151.21) -- (216.21,210.21) ;
\draw    (240.21,91.21) -- (290.21,150.21) ;
\draw  [fill={rgb, 255:red, 255; green, 255; blue, 255 }  ,fill opacity=1 ] (283.5,143.5) -- (296.93,143.5) -- (296.93,156.93) -- (283.5,156.93) -- cycle ;
\draw  [dash pattern={on 4.5pt off 4.5pt}]  (190.21,151.21) -- (190.21,330.7) ;
\draw  [dash pattern={on 4.5pt off 4.5pt}]  (240.21,91.21) -- (240.21,330.7) ;
\draw  [dash pattern={on 4.5pt off 4.5pt}]  (216.21,210.21) -- (216.21,328.7) ;
\draw    (216.21,210.21) -- (203.21,266.21) ;
\draw    (216.21,210.21) -- (228.21,266.21) ;
\draw  [fill={rgb, 255:red, 255; green, 255; blue, 255 }  ,fill opacity=1 ] (196.5,259.5) -- (209.93,259.5) -- (209.93,272.93) -- (196.5,272.93) -- cycle ;
\draw  [fill={rgb, 255:red, 255; green, 255; blue, 255 }  ,fill opacity=1 ] (221.5,259.5) -- (234.93,259.5) -- (234.93,272.93) -- (221.5,272.93) -- cycle ;
\draw  [dash pattern={on 4.5pt off 4.5pt}]  (320.21,31.21) -- (320.21,330.7) ;
\draw  [fill={rgb, 255:red, 0; green, 0; blue, 0 }  ,fill opacity=1 ] (325.93,31.21) .. controls (325.93,27.78) and (323.15,25) .. (319.71,25) .. controls (316.28,25) and (313.5,27.78) .. (313.5,31.21) .. controls (313.5,34.65) and (316.28,37.43) .. (319.71,37.43) .. controls (323.15,37.43) and (325.93,34.65) .. (325.93,31.21) -- cycle ;
\draw  [fill={rgb, 255:red, 0; green, 0; blue, 0 }  ,fill opacity=1 ] (405.93,91.21) .. controls (405.93,87.78) and (403.15,85) .. (399.71,85) .. controls (396.28,85) and (393.5,87.78) .. (393.5,91.21) .. controls (393.5,94.65) and (396.28,97.43) .. (399.71,97.43) .. controls (403.15,97.43) and (405.93,94.65) .. (405.93,91.21) -- cycle ;
\draw    (319.71,31.21) -- (399.71,91.21) ;
\draw  [fill={rgb, 255:red, 0; green, 0; blue, 0 }  ,fill opacity=1 ] (455.93,151.21) .. controls (455.93,147.78) and (453.15,145) .. (449.71,145) .. controls (446.28,145) and (443.5,147.78) .. (443.5,151.21) .. controls (443.5,154.65) and (446.28,157.43) .. (449.71,157.43) .. controls (453.15,157.43) and (455.93,154.65) .. (455.93,151.21) -- cycle ;
\draw    (399.71,91.21) -- (449.71,151.21) ;
\draw    (449.71,151.21) -- (476.71,210.21) ;
\draw  [fill={rgb, 255:red, 255; green, 255; blue, 255 }  ,fill opacity=1 ] (483.43,203.5) -- (470,203.5) -- (470,216.93) -- (483.43,216.93) -- cycle ;
\draw    (449.71,151.21) -- (423.71,210.21) ;
\draw    (399.71,91.21) -- (349.71,150.21) ;
\draw  [fill={rgb, 255:red, 255; green, 255; blue, 255 }  ,fill opacity=1 ] (356.43,143.5) -- (343,143.5) -- (343,156.93) -- (356.43,156.93) -- cycle ;
\draw  [dash pattern={on 4.5pt off 4.5pt}]  (449.71,151.21) -- (449.71,330.7) ;
\draw  [dash pattern={on 4.5pt off 4.5pt}]  (399.71,91.21) -- (399.71,330.7) ;
\draw  [fill={rgb, 255:red, 255; green, 255; blue, 255 }  ,fill opacity=1 ] (430.43,203.5) -- (417,203.5) -- (417,216.93) -- (430.43,216.93) -- cycle ;
\draw [color={rgb, 255:red, 208; green, 2; blue, 27 }  ,draw opacity=1 ]   (201.7,319.7) -- (207.41,325.68) ;
\draw [color={rgb, 255:red, 208; green, 2; blue, 27 }  ,draw opacity=1 ]   (207.43,319.7) -- (201.7,325.7) ;

\draw [color={rgb, 255:red, 208; green, 2; blue, 27 }  ,draw opacity=1 ]   (220.7,319.7) -- (226.41,325.68) ;
\draw [color={rgb, 255:red, 208; green, 2; blue, 27 }  ,draw opacity=1 ]   (226.43,319.7) -- (220.7,325.7) ;

\draw [color={rgb, 255:red, 208; green, 2; blue, 27 }  ,draw opacity=1 ]   (383.7,319.7) -- (389.41,325.68) ;
\draw [color={rgb, 255:red, 208; green, 2; blue, 27 }  ,draw opacity=1 ]   (389.43,319.7) -- (383.7,325.7) ;

\draw [color={rgb, 255:red, 208; green, 2; blue, 27 }  ,draw opacity=1 ]   (408.7,319.7) -- (414.41,325.68) ;
\draw [color={rgb, 255:red, 208; green, 2; blue, 27 }  ,draw opacity=1 ]   (414.43,319.7) -- (408.7,325.7) ;

\draw [color={rgb, 255:red, 208; green, 2; blue, 27 }  ,draw opacity=1 ]   (473.7,319.7) -- (479.41,325.68) ;
\draw [color={rgb, 255:red, 208; green, 2; blue, 27 }  ,draw opacity=1 ]   (479.43,319.7) -- (473.7,325.7) ;

\draw    (226.43,349.84) .. controls (203.99,351.71) and (204.28,342.21) .. (204.41,332.8) ;
\draw [shift={(204.43,330.84)}, rotate = 90] [color={rgb, 255:red, 0; green, 0; blue, 0 }  ][line width=0.75]    (6.56,-1.97) .. controls (4.17,-0.84) and (1.99,-0.18) .. (0,0) .. controls (1.99,0.18) and (4.17,0.84) .. (6.56,1.97)   ;

\draw (143,329) node [anchor=north west][inner sep=0.75pt]    {$0$};
\draw (495,330.14) node [anchor=north west][inner sep=0.75pt]    {$1$};
\draw (230,336.55) node [anchor=north west][inner sep=0.75pt]    {$( N_{\epsilon } -1) \ \ points$};

\end{tikzpicture}
    \caption{Dyadic splitting until each interval contains at most one point.}
    \label{fig:0}
\end{figure}

Assuming $\gamma\geq1$, our splitting criterion is such that any interval with a single point is never split. By analyzing the expected minimum distance between any two points in an interval, we can determine an upper bound for the expected height (and in turn size) of a tree.

Consider $N_\epsilon$ uniformly distributed points on an interval (without loss of generality, $[0,1]$). Let $D$ be an integer random variable taking value $i$ when the minimum distance between two points of the interval lies in $(2^{-i-1}, 2^{-i}]$. 

We split $[0,1]$ dyadically until each interval has $0$ or $1$ point (as depicted in figure \ref{fig:0}). The expected number of internal nodes of this tree is 
\begin{align*}
    \sum_{\ell=0}^\infty 2^\ell\cdot  &\P\Bigg\{\bigg[0,\frac{1}{2^\ell}\bigg] \text{ contains at least 2 points}\Bigg\}\\
    &\leq
        \sum_{\ell=0}^\infty 2^\ell \cdot \binom{N_\epsilon-1}{2} \frac{1}{2^{2\ell}} \leq \frac{(N_\epsilon-1)^2}{2} \sum_{\ell=0}^\infty \frac{1}{2^\ell} = (N_\epsilon-1)^2
\end{align*}
where $\ell$ is the level number in the tree.

Thus, the expected size is $\leq 2(N_\epsilon-1)^2+1< 2N_\epsilon^2$ since the number of leaves equals the number of internal nodes plus one. 

We conclude that, under the assumption of uniformly distributed data, the expected tree size is finite and uniformly bounded over all values of $n$:
\begin{align*}
    \mathbb{E}\{|T_n|\}&\leq \inf_{\epsilon>0}\,\frac{1}{1-2(1+\epsilon)\Phi(\gamma)}\cdot 2N_\epsilon^2 \\
    &\leq \inf_{\epsilon>0}\,\frac{2}{1-2(1+\epsilon)\Phi(\gamma)} \cdot \left(\frac{1}{\big(\epsilon\cdot \Phi(\gamma)\big)^2}+1\right)^2\stackrel{\text{def}}{=}\varphi(\gamma) < \infty.
\end{align*}
Similar reasoning yields the following theorem.
\begin{thm}
    Let $f=1$ on $[0,1]$ and $f=0$ elsewhere. Then if $\Phi(\gamma)+\tfrac{1}{\gamma} <1/2$,
    \[
        \E\left\{\int_0^1 |f_n(x)-f(x)|dx\right\}=O\left(\frac{1}{\sqrt{n}}\right).
    \]
\end{thm}
\begin{rem}
    Any choice of $\gamma\geq 3$ ensures that this condition is satisfied.
\end{rem}
\begin{proof}
    Fix $x\in [0,1]$. Conditioning on the height $h$ of the leaf to which $x$ belongs (in the partition tree defining $f_n$) and using Cauchy-Schwarz, we get
 \begin{align*}
        \E\{|f_n(x)-f(x)|\} 
        &\leq \sum_{\ell \geq 0}\sqrt{\P\{h=\ell\}\,\E \{|f_n(x)-f(x)|^2\,|\,h=\ell\}} \\
        &=\frac{1}{\sqrt{n}}\sum_{\ell\geq 0}\sqrt{2^\ell\P\{h=\ell\}}.
 \end{align*}
    We know that for any node containing $N$ points, the probability of it splitting is bounded by $p_N=\Phi(\gamma)+1/\sqrt{N}$. However, the condition on $\gamma$ implies that $p_N<1/2$ uniformly. It follows that $\P\{h=\ell\}< (1/2)^\ell$, and the summation above is $O(1)$ as a geometric series. Applying Tonelli's theorem to $\E\{\int |f_n-f|\}$ thus yields the desired result.
\end{proof}


\section{The deterministic infinite tree}

\subsection{Notation, setup and main proposition}
Towards our goal of proving Theorem \ref{maintheorem}, we begin with the analysis of the infinite full binary tree depicted in figure \ref{fig:2} and denoted by $\mathcal{T_\infty}$. It is analogous to $T_n$ in that each node of $\mathcal{T_\infty}$ is associated with a sub-interval of $[0,1]$; more specifically, if $C_1,...,C_{2^\ell}$ are $\mathcal{T_\infty}$'s level $\ell$ nodes labelled left to right, then $C_i$ corresponds to the interval $[(i-1)/2^{\ell}, i/2^{\ell}]$ for any $1\leq i \leq 2^\ell$.  It helps to view the random tree $T_n$ as a subset of this infinite deterministic tree. 

As in the introduction, the left and right halves of a node $C\in\mathcal{T}_\infty$ are denoted by $C_1$ and $C_2$, respectively. It is said to be \textit{balanced} (and is uncoloured in figure \ref{fig:2}) if it satisfies
\begin{equation}\label{eq:idealsplit}
        p(C_1)-p(C_2)\leq \gamma \sqrt{\frac{{p(C)}}{n}},
\end{equation}
where $\gamma$ is the parameter previously defined for the algorithm $(\ref{algo}$) and, as above, $p(C)=\int_Cf$. The set of all such nodes is denoted by $\mathcal{B}$. All other (coloured) nodes are said to be \textit{unbalanced} and belong to $\mathcal{B}^c$, the complement of $\mathcal{B}$. Similarly, for any positive real number $\alpha$, we denote by $\mathcal{B}^{(\alpha)}$ the set of nodes satisfying
\[
    p(C_1)-p(C_2)\leq \alpha \gamma \sqrt{\frac{{p(C)}}{n}},
\]
noting that $\mathcal{B}=\mathcal{B}^{(1)}$. The integer $\ell^*$ is defined as
\[
    \ell^* = \min\bigg\{\ell \in \mathbb{Z}_{>0}: \frac{B}{2^{\ell+1}} \leq \gamma \cdot 2^{\ell/2}\sqrt{\frac{{B}}{n}}\bigg\}.
\]
We denote by $\mathcal{P}_j(\mathcal{T}_\infty)=\mathcal{P}_j$ the set of nodes of $\mathcal{T}_\infty$ with \textit{exactly} $j$ balanced ancestors. Lastly, for any node $C$, the average value of $f$ on $C$ is denoted by $f(C)$. 

Note that if we were to truncate $\mathcal{T}_\infty$ by deleting nodes that fall below those belonging to $\mathcal{B}\cap \mathcal{P}_0$, the resulting tree (with leaf set $\mathcal{B}\cap \mathcal{P}_0$) would be the tree generated by the algorithm (\ref{algo}) if every interval $C$ contained its \textit{expected} number of data points, $np(C)$ (in which case our splitting rule becomes the negation of ($\ref{eq:idealsplit}$)). If this were the case, the density estimate extracted from this tree would therefore, on each leaf $C$, be equal to $f(C)$.  We begin by showing that Theorem $\ref{maintheorem}$ holds for this function, as stated in the following proposition.

\begin{prop}\label{prop}
    Let $f$ be a bounded decreasing probability density function on $[0,1]$ with $B=f(0)<\infty$, and let $\mathcal{F}_n$ be the function that takes the value $f(C)$ on every $C\in\mathcal{P}_0\cap \mathcal{B}$. Then the $L_1$ distance between these two functions does not exceed $c_0\cdot(B^{2/3}/n^{1/3})$ for some constant $c_0\in \R_{>0}$ that does not depend on $B$ or $n$.
    
\end{prop}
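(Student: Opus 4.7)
The plan is to decompose the $L_1$ error leafwise,
\[
\int_0^1 |\mathcal{F}_n - f|\,dx \;=\; \sum_{C \in \mathcal{P}_0 \cap \mathcal{B}} \int_C |f - f(C)|\,dx,
\]
and to bound the sum by splitting the leaves according to whether their depth exceeds $\ell^*$. The analytic input I would prove first---and the main obstacle of the argument---is a sharp monotone-function bound: for any decreasing $f$ on $C = [a,b]$,
\[
\int_C |f - f(C)|\,dx \;\leq\; \lambda(C)\bigl(f(C') - f(C'')\bigr) \;=\; 2\bigl(p(C') - p(C'')\bigr). \qquad (\star)
\]
This is what couples the leafwise $L_1$ error (an arbitrary integral of a monotone function) to the balancedness criterion, which only controls the split $p(C')-p(C'')$. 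To prove $(\star)$, I would introduce $\Phi(x) := F(x) - f(C)(x-a)$ on $C$ with $F(x)=\int_a^x f$: then $\Phi(a)=\Phi(b)=0$, and since $\Phi'(x) = f(x) - f(C)$ is decreasing, $\Phi$ is concave, with maximum at the point $c_*$ where $f(c_*) = f(C)$. A direct computation gives $\max \Phi = \tfrac12 \int_C |f - f(C)|\,dx$ and $\Phi(m) = \tfrac12 (p(C') - p(C''))$ for $m = (a+b)/2$. A short chord argument then yields that any concave $\Phi$ with $\Phi(a) = \Phi(b) = 0$ satisfies $\max \Phi \leq 2 \Phi(m)$: if $c_* \geq m$, concavity between $(a,0)$ and $(c_*, \Phi(c_*))$ gives $\Phi(m) \geq \Phi(c_*)\cdot\tfrac{m-a}{c_*-a} \geq \tfrac12 \Phi(c_*)$, and symmetrically if $c_* \leq m$ one uses the chord between $(c_*, \Phi(c_*))$ and $(b,0)$.

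With $(\star)$ in hand, I would partition the leaves by depth. Call $C$ \emph{shallow} if $\ell_C \leq \ell^*$, so $\lambda(C) \geq 2^{-\ell^*}$ and there are at most $2^{\ell^*}$ such leaves (since the total leaf length is $1$); otherwise call $C$ \emph{deep}, so $\lambda(C) \leq 2^{-\ell^*-1}$. For shallow $C$, balancedness $p(C') - p(C'') \leq \gamma \sqrt{p(C)/n}$ combined with $(\star)$ gives $\int_C |f - f(C)|\,dx \leq 2\gamma \sqrt{p(C)/n}$. Using $p(C) \leq B\lambda(C)$, Cauchy--Schwarz, and the shallow-leaf count,
\[
\sum_{C \text{ shallow}} \sqrt{p(C)} \;\leq\; \sqrt{B}\sum_{C \text{ shallow}}\sqrt{\lambda(C)} \;\leq\; \sqrt{B \cdot 2^{\ell^*}},
\]
so the shallow contribution is at most $2\gamma \sqrt{B \cdot 2^{\ell^*}/n}$. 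The defining inequality of $\ell^*$ gives $2^{\ell^*} \leq c(\gamma)(Bn)^{1/3}$, which reduces this to $O(B^{2/3}/n^{1/3})$.

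For deep leaves, the trivial pointwise inequality $|f - f(C)| \leq f(a_C) - f(b_C)$ on $C = [a_C, b_C]$ gives $\int_C |f - f(C)|\,dx \leq \lambda(C)(f(a_C) - f(b_C))$. Since the leaves partition $[0,1]$, a telescoping bound yields
\[
\sum_{C \text{ deep}} \lambda(C)(f(a_C) - f(b_C)) \;\leq\; 2^{-\ell^*-1}\sum_{\text{all leaves}}(f(a_C) - f(b_C)) \;=\; 2^{-\ell^*-1}(f(0)-f(1)) \;\leq\; B/2^{\ell^*+1},
\]
which is again $O(B^{2/3}/n^{1/3})$ by the definition of $\ell^*$. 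Adding the shallow and deep totals gives the claimed bound with $c_0$ depending only on $\gamma$. Once $(\star)$ has been established, the remainder is exactly the bias-variance balancing at the scale $\ell^*$ familiar from Birg\'e's analysis of histograms for monotone densities.
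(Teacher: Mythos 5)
Your proposal is correct and arrives at the same bound, but several of the technical ingredients differ from the paper's. Your inequality $(\star)$ is precisely the upper bound of the paper's Lemma~\ref{lem4}, but your proof via concavity of $\Phi(x)=F(x)-f(C)(x-a)$ and a chord argument is cleaner than the paper's case analysis on the sign of $B_2-B_1$. After $(\star)$ and balancedness the two arguments diverge in how they sum over leaves. The paper bounds the contribution of each \emph{level} $\ell$ by $\min\bigl(B/2^{\ell+1},\,\gamma\,2^{\ell/2}\sqrt{B/n}\bigr)$ using Lemmas~\ref{lem5} (a telescope of $p(C')-p(C'')$ across $\mathcal{A}_\ell$) and \ref{lem6} (Jensen applied to $\sqrt{p(C)}$ across $\mathcal{A}_\ell$), then sums the $\min$ over all $\ell$, with the crossover at $\ell^*$ emerging automatically. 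You instead split the \emph{leaves} into shallow ($\ell\le\ell^*$) and deep ($\ell>\ell^*$), bounding the shallow sum with Cauchy--Schwarz plus the leaf-count bound $|\{\text{shallow leaves}\}|\le 2^{\ell^*}$, and the deep sum with the crude pointwise bound $|f-f(C)|\le f(a_C)-f(b_C)$ followed by a telescope of $f(a_C)-f(b_C)$ over the whole partition. These are genuinely different technical routes: yours avoids Lemmas~\ref{lem5} and \ref{lem6} entirely and uses a total-variation telescope for the deep regime rather than the balanced-split telescope; the paper's version is slightly more uniform (the same two lemmas at every level) and is reused later in the proof of Proposition~\ref{prop11}, whereas your deep-leaf bound, while simpler, would not extend to the random-tree setting where you cannot assume the leaves partition all of $[0,1]$ by depth alone. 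Both approaches yield $O(B^{2/3}/n^{1/3})$ with the same dependence on $\gamma$.
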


\subsection{Preliminary results and lemmas}

The following three lemmas are needed to prove Proposition $\ref{prop}$.

    \begin{figure}
    \centering
    \begin{minipage}{0.5\textwidth}
      \centering

\tikzset{every picture/.style={line width=0.75pt}} 

\begin{tikzpicture}[x=0.75pt,y=0.75pt,yscale=-.8,xscale=.8]

\draw    (191,635.34) -- (176,663.34) ;
\draw    (191,635.34) -- (205.43,685.55) ;
\draw    (221,603.34) -- (250.43,729.55) ;
\draw    (281,574.34) -- (340.93,740.55) ;
\draw    (148.43,769.41) -- (432.43,769.41) ;
\draw [shift={(434.43,769.41)}, rotate = 180] [color={rgb, 255:red, 0; green, 0; blue, 0 }  ][line width=0.75]    (10.93,-3.29) .. controls (6.95,-1.4) and (3.31,-0.3) .. (0,0) .. controls (3.31,0.3) and (6.95,1.4) .. (10.93,3.29)   ;
\draw    (160,776.41) -- (160,571.27) ;
\draw [shift={(160,569.27)}, rotate = 90] [color={rgb, 255:red, 0; green, 0; blue, 0 }  ][line width=0.75]    (10.93,-3.29) .. controls (6.95,-1.4) and (3.31,-0.3) .. (0,0) .. controls (3.31,0.3) and (6.95,1.4) .. (10.93,3.29)   ;
\draw    (191,765.13) -- (191,774.41) ;
\draw    (221,766.13) -- (221,774.41) ;
\draw    (281,766.13) -- (281,774.41) ;
\draw    (340,765.27) -- (340,773.55) ;
\draw    (401,765.27) -- (401,773.55) ;
\draw  [dash pattern={on 4.5pt off 4.5pt}]  (191,635.34) -- (191,765.13) ;
\draw  [dash pattern={on 4.5pt off 4.5pt}]  (221,603.34) -- (221,771.41) ;
\draw  [dash pattern={on 4.5pt off 4.5pt}]  (281,574.34) -- (281,773.41) ;
\draw  [color={rgb, 255:red, 208; green, 2; blue, 27 }  ,draw opacity=1 ][fill={rgb, 255:red, 255; green, 255; blue, 255 }  ,fill opacity=1 ] (182.21,663.34) .. controls (182.21,659.91) and (179.43,657.13) .. (176,657.13) .. controls (172.57,657.13) and (169.79,659.91) .. (169.79,663.34) .. controls (169.79,666.77) and (172.57,669.55) .. (176,669.55) .. controls (179.43,669.55) and (182.21,666.77) .. (182.21,663.34) -- cycle ;
\draw  [color={rgb, 255:red, 208; green, 2; blue, 27 }  ,draw opacity=1 ][fill={rgb, 255:red, 255; green, 255; blue, 255 }  ,fill opacity=1 ] (211.64,685.55) .. controls (211.64,682.12) and (208.86,679.34) .. (205.43,679.34) .. controls (202,679.34) and (199.21,682.12) .. (199.21,685.55) .. controls (199.21,688.99) and (202,691.77) .. (205.43,691.77) .. controls (208.86,691.77) and (211.64,688.99) .. (211.64,685.55) -- cycle ;
\draw  [color={rgb, 255:red, 208; green, 2; blue, 27 }  ,draw opacity=1 ][fill={rgb, 255:red, 255; green, 255; blue, 255 }  ,fill opacity=1 ] (256.64,729.55) .. controls (256.64,726.12) and (253.86,723.34) .. (250.43,723.34) .. controls (247,723.34) and (244.21,726.12) .. (244.21,729.55) .. controls (244.21,732.99) and (247,735.77) .. (250.43,735.77) .. controls (253.86,735.77) and (256.64,732.99) .. (256.64,729.55) -- cycle ;
\draw  [color={rgb, 255:red, 208; green, 2; blue, 27 }  ,draw opacity=1 ][fill={rgb, 255:red, 255; green, 255; blue, 255 }  ,fill opacity=1 ] (347.14,740.55) .. controls (347.14,737.12) and (344.36,734.34) .. (340.93,734.34) .. controls (337.5,734.34) and (334.71,737.12) .. (334.71,740.55) .. controls (334.71,743.99) and (337.5,746.77) .. (340.93,746.77) .. controls (344.36,746.77) and (347.14,743.99) .. (347.14,740.55) -- cycle ;
\draw [color={rgb, 255:red, 208; green, 2; blue, 27 }  ,draw opacity=1 ]   (190.43,663.27) -- (190.43,685.55) ;
\draw [color={rgb, 255:red, 208; green, 2; blue, 27 }  ,draw opacity=1 ]   (221,685.77) -- (221,729.55) ;
\draw [color={rgb, 255:red, 208; green, 2; blue, 27 }  ,draw opacity=1 ]   (280.43,729.55) -- (281.43,740.55) ;
\draw    (281,574.34) -- (221,603.34) ;
\draw    (221,603.34) -- (191,635.34) ;
\draw [color={rgb, 255:red, 208; green, 2; blue, 27 }  ,draw opacity=1 ]   (350.43,600.41) -- (370.43,600.41) ;
\draw    (350.43,620.41) -- (370.43,620.41) ;
\draw  [fill={rgb, 255:red, 255; green, 255; blue, 255 }  ,fill opacity=1 ] (287.21,574.34) .. controls (287.21,570.91) and (284.43,568.13) .. (281,568.13) .. controls (277.57,568.13) and (274.79,570.91) .. (274.79,574.34) .. controls (274.79,577.77) and (277.57,580.55) .. (281,580.55) .. controls (284.43,580.55) and (287.21,577.77) .. (287.21,574.34) -- cycle ;
\draw  [fill={rgb, 255:red, 255; green, 255; blue, 255 }  ,fill opacity=1 ] (227.21,603.34) .. controls (227.21,599.91) and (224.43,597.13) .. (221,597.13) .. controls (217.57,597.13) and (214.79,599.91) .. (214.79,603.34) .. controls (214.79,606.77) and (217.57,609.55) .. (221,609.55) .. controls (224.43,609.55) and (227.21,606.77) .. (227.21,603.34) -- cycle ;
\draw  [fill={rgb, 255:red, 255; green, 255; blue, 255 }  ,fill opacity=1 ] (197.21,635.34) .. controls (197.21,631.91) and (194.43,629.13) .. (191,629.13) .. controls (187.57,629.13) and (184.79,631.91) .. (184.79,635.34) .. controls (184.79,638.77) and (187.57,641.55) .. (191,641.55) .. controls (194.43,641.55) and (197.21,638.77) .. (197.21,635.34) -- cycle ;
\draw [color={rgb, 255:red, 208; green, 2; blue, 27 }  ,draw opacity=1 ]   (160,663.27) -- (190.43,663.27) ;
\draw [color={rgb, 255:red, 208; green, 2; blue, 27 }  ,draw opacity=1 ]   (190.21,685.55) -- (220.64,685.55) ;
\draw [color={rgb, 255:red, 208; green, 2; blue, 27 }  ,draw opacity=1 ][fill={rgb, 255:red, 208; green, 2; blue, 27 }  ,fill opacity=1 ]   (220.43,729.55) -- (280.43,729.55) ;
\draw [color={rgb, 255:red, 208; green, 2; blue, 27 }  ,draw opacity=1 ]   (281.43,740.55) -- (412.43,740.55) ;
\draw    (160.43,653.55) .. controls (188.43,657.55) and (169.43,682.55) .. (204.43,685.55) .. controls (239.43,688.55) and (213.43,728.55) .. (255.43,733.55) .. controls (297.43,738.55) and (333.43,738.55) .. (413.43,746.55) ;

\draw (170,778.27) node [anchor=north west][inner sep=0.75pt]  [font=\footnotesize]  {$0.125$};
\draw (206,778.27) node [anchor=north west][inner sep=0.75pt]  [font=\footnotesize]  {$0.25$};
\draw (268,778.27) node [anchor=north west][inner sep=0.75pt]  [font=\footnotesize]  {$0.5$};
\draw (325,778.27) node [anchor=north west][inner sep=0.75pt]  [font=\footnotesize]  {$0.75$};
\draw (395,778.27) node [anchor=north west][inner sep=0.75pt]  [font=\footnotesize]  {$1$};
\draw (370,613.41) node [anchor=north west][inner sep=0.75pt]  [font=\scriptsize]  {$f( x)$};
\draw (371,592.41) node [anchor=north west][inner sep=0.75pt]  [font=\scriptsize,color={rgb, 255:red, 208; green, 2; blue, 27 }  ,opacity=1 ]  {$f_{n}( x)$};
\draw (154,778.41) node [anchor=north west][inner sep=0.75pt]  [font=\footnotesize]  {$0$};

\end{tikzpicture}
\captionsetup{justification=centering}  
      \captionof{figure}{An estimate $f_n$ using a finite,\newline random tree $T_n$.}
      \label{fig:1}
    \end{minipage}%
    \begin{minipage}{.5\textwidth}
      \centering

\tikzset{every picture/.style={line width=0.75pt}} 

\begin{tikzpicture}[x=0.75pt,y=0.75pt,yscale=-.8,xscale=.8]

\draw  [fill={rgb, 255:red, 0; green, 0; blue, 0 }  ,fill opacity=1 ] (206.93,1034.34) .. controls (206.93,1030.91) and (204.15,1028.13) .. (200.71,1028.13) .. controls (197.28,1028.13) and (194.5,1030.91) .. (194.5,1034.34) .. controls (194.5,1037.77) and (197.28,1040.55) .. (200.71,1040.55) .. controls (204.15,1040.55) and (206.93,1037.77) .. (206.93,1034.34) -- cycle ;
\draw  [fill={rgb, 255:red, 0; green, 0; blue, 0 }  ,fill opacity=1 ] (296.93,1034.34) .. controls (296.93,1030.91) and (294.15,1028.13) .. (290.71,1028.13) .. controls (287.28,1028.13) and (284.5,1030.91) .. (284.5,1034.34) .. controls (284.5,1037.77) and (287.28,1040.55) .. (290.71,1040.55) .. controls (294.15,1040.55) and (296.93,1037.77) .. (296.93,1034.34) -- cycle ;
\draw  [fill={rgb, 255:red, 0; green, 0; blue, 0 }  ,fill opacity=1 ] (221.93,985.34) .. controls (221.93,981.91) and (219.15,979.13) .. (215.71,979.13) .. controls (212.28,979.13) and (209.5,981.91) .. (209.5,985.34) .. controls (209.5,988.77) and (212.28,991.55) .. (215.71,991.55) .. controls (219.15,991.55) and (221.93,988.77) .. (221.93,985.34) -- cycle ;
\draw  [dash pattern={on 4.5pt off 4.5pt}]  (306.71,905.34) -- (306.71,1080.7) ;
\draw  [fill={rgb, 255:red, 0; green, 0; blue, 0 }  ,fill opacity=1 ] (327.93,1034.34) .. controls (327.93,1030.91) and (325.15,1028.13) .. (321.71,1028.13) .. controls (318.28,1028.13) and (315.5,1030.91) .. (315.5,1034.34) .. controls (315.5,1037.77) and (318.28,1040.55) .. (321.71,1040.55) .. controls (325.15,1040.55) and (327.93,1037.77) .. (327.93,1034.34) -- cycle ;
\draw  [fill={rgb, 255:red, 0; green, 0; blue, 0 }  ,fill opacity=1 ] (357.93,1034.34) .. controls (357.93,1030.91) and (355.15,1028.13) .. (351.71,1028.13) .. controls (348.28,1028.13) and (345.5,1030.91) .. (345.5,1034.34) .. controls (345.5,1037.77) and (348.28,1040.55) .. (351.71,1040.55) .. controls (355.15,1040.55) and (357.93,1037.77) .. (357.93,1034.34) -- cycle ;
\draw  [fill={rgb, 255:red, 0; green, 0; blue, 0 }  ,fill opacity=1 ] (417.93,1034.34) .. controls (417.93,1030.91) and (415.15,1028.13) .. (411.71,1028.13) .. controls (408.28,1028.13) and (405.5,1030.91) .. (405.5,1034.34) .. controls (405.5,1037.77) and (408.28,1040.55) .. (411.71,1040.55) .. controls (415.15,1040.55) and (417.93,1037.77) .. (417.93,1034.34) -- cycle ;
\draw  [dash pattern={on 4.5pt off 4.5pt}]  (245.71,950.49) -- (245.71,1080.7) ;
\draw  [dash pattern={on 4.5pt off 4.5pt}]  (215.71,995.64) -- (215.71,1080.7) ;
\draw  [dash pattern={on 4.5pt off 4.5pt}]  (366.71,950.49) -- (366.71,1080.7) ;
\draw  [dash pattern={on 4.5pt off 4.5pt}]  (336.71,995.64) -- (336.71,1080.7) ;
\draw  [dash pattern={on 4.5pt off 4.5pt}]  (275.71,995.64) -- (275.71,1080.7) ;
\draw  [dash pattern={on 4.5pt off 4.5pt}]  (396.71,995.64) -- (396.71,1079.57) ;
\draw    (245.71,945.34) -- (275.71,985.34) ;
\draw    (366.71,945.34) -- (396.71,985.34) ;
\draw    (306.71,905.34) -- (366.71,945.34) ;
\draw    (306.71,905.34) -- (245.71,945.34) ;
\draw    (245.71,945.34) -- (215.71,985.34) ;
\draw    (215.71,985.34) -- (200.71,1034.34) ;
\draw    (215.71,985.34) -- (230.71,1034.34) ;
\draw    (275.71,985.34) -- (290.71,1034.34) ;
\draw    (275.71,985.34) -- (260.71,1034.34) ;
\draw    (336.71,985.34) -- (351.71,1034.34) ;
\draw    (336.71,985.34) -- (321.71,1034.34) ;
\draw    (396.71,985.34) -- (411.71,1034.34) ;
\draw    (366.71,945.34) -- (336.71,985.34) ;
\draw    (396.71,985.34) -- (381.71,1034.34) ;
\draw    (200.71,1034.34) -- (194.43,1051.7) ;
\draw  [dash pattern={on 0.84pt off 2.51pt}]  (194.43,1051.7) -- (190.43,1061.84) ;

\draw    (200.71,1034.34) -- (206.43,1051.7) ;
\draw    (260.71,1034.34) -- (254.43,1051.7) ;
\draw    (260.71,1034.34) -- (266.43,1051.7) ;
\draw    (290.71,1034.34) -- (284.43,1051.7) ;
\draw    (290.71,1034.34) -- (296.43,1051.7) ;
\draw    (351.71,1034.34) -- (345.43,1051.7) ;
\draw    (351.71,1034.34) -- (357.43,1051.7) ;
\draw    (321.71,1034.34) -- (315.43,1051.7) ;
\draw    (321.71,1034.34) -- (327.43,1051.7) ;
\draw    (411.71,1034.34) -- (405.43,1051.7) ;
\draw    (411.71,1034.34) -- (417.43,1051.7) ;
\draw    (381.71,1034.34) -- (375.43,1051.7) ;
\draw    (381.71,1034.34) -- (387.43,1051.7) ;
\draw  [fill={rgb, 255:red, 255; green, 255; blue, 255 }  ,fill opacity=1 ] (266.93,1034.34) .. controls (266.93,1030.91) and (264.15,1028.13) .. (260.71,1028.13) .. controls (257.28,1028.13) and (254.5,1030.91) .. (254.5,1034.34) .. controls (254.5,1037.77) and (257.28,1040.55) .. (260.71,1040.55) .. controls (264.15,1040.55) and (266.93,1037.77) .. (266.93,1034.34) -- cycle ;
\draw  [fill={rgb, 255:red, 255; green, 255; blue, 255 }  ,fill opacity=1 ] (281.93,985.34) .. controls (281.93,981.91) and (279.15,979.13) .. (275.71,979.13) .. controls (272.28,979.13) and (269.5,981.91) .. (269.5,985.34) .. controls (269.5,988.77) and (272.28,991.55) .. (275.71,991.55) .. controls (279.15,991.55) and (281.93,988.77) .. (281.93,985.34) -- cycle ;
\draw  [fill={rgb, 255:red, 255; green, 255; blue, 255 }  ,fill opacity=1 ] (251.93,945.34) .. controls (251.93,941.91) and (249.15,939.13) .. (245.71,939.13) .. controls (242.28,939.13) and (239.5,941.91) .. (239.5,945.34) .. controls (239.5,948.77) and (242.28,951.55) .. (245.71,951.55) .. controls (249.15,951.55) and (251.93,948.77) .. (251.93,945.34) -- cycle ;
\draw  [fill={rgb, 255:red, 255; green, 255; blue, 255 }  ,fill opacity=1 ] (312.93,905.34) .. controls (312.93,901.91) and (310.15,899.13) .. (306.71,899.13) .. controls (303.28,899.13) and (300.5,901.91) .. (300.5,905.34) .. controls (300.5,908.77) and (303.28,911.55) .. (306.71,911.55) .. controls (310.15,911.55) and (312.93,908.77) .. (312.93,905.34) -- cycle ;
\draw  [fill={rgb, 255:red, 255; green, 255; blue, 255 }  ,fill opacity=1 ] (387.93,1034.34) .. controls (387.93,1030.91) and (385.15,1028.13) .. (381.71,1028.13) .. controls (378.28,1028.13) and (375.5,1030.91) .. (375.5,1034.34) .. controls (375.5,1037.77) and (378.28,1040.55) .. (381.71,1040.55) .. controls (385.15,1040.55) and (387.93,1037.77) .. (387.93,1034.34) -- cycle ;
\draw  [fill={rgb, 255:red, 255; green, 255; blue, 255 }  ,fill opacity=1 ] (342.93,985.34) .. controls (342.93,981.91) and (340.15,979.13) .. (336.71,979.13) .. controls (333.28,979.13) and (330.5,981.91) .. (330.5,985.34) .. controls (330.5,988.77) and (333.28,991.55) .. (336.71,991.55) .. controls (340.15,991.55) and (342.93,988.77) .. (342.93,985.34) -- cycle ;
\draw  [fill={rgb, 255:red, 255; green, 255; blue, 255 }  ,fill opacity=1 ] (402.93,985.34) .. controls (402.93,981.91) and (400.15,979.13) .. (396.71,979.13) .. controls (393.28,979.13) and (390.5,981.91) .. (390.5,985.34) .. controls (390.5,988.77) and (393.28,991.55) .. (396.71,991.55) .. controls (400.15,991.55) and (402.93,988.77) .. (402.93,985.34) -- cycle ;
\draw  [fill={rgb, 255:red, 255; green, 255; blue, 255 }  ,fill opacity=1 ] (372.93,945.34) .. controls (372.93,941.91) and (370.15,939.13) .. (366.71,939.13) .. controls (363.28,939.13) and (360.5,941.91) .. (360.5,945.34) .. controls (360.5,948.77) and (363.28,951.55) .. (366.71,951.55) .. controls (370.15,951.55) and (372.93,948.77) .. (372.93,945.34) -- cycle ;
\draw    (177,1080.41) -- (440.43,1080.41) ;
\draw [shift={(442.43,1080.41)}, rotate = 180] [color={rgb, 255:red, 0; green, 0; blue, 0 }  ][line width=0.75]    (10.93,-3.29) .. controls (6.95,-1.4) and (3.31,-0.3) .. (0,0) .. controls (3.31,0.3) and (6.95,1.4) .. (10.93,3.29)   ;
\draw    (187,1075.84) -- (187,1086.55) ;
\draw    (424,1074.84) -- (424,1085.55) ;
\draw  [dash pattern={on 0.84pt off 2.51pt}]  (206.43,1051.7) -- (210.43,1062.55) ;
\draw    (230.71,1034.34) -- (224.43,1051.7) ;
\draw    (230.71,1034.34) -- (236.43,1051.7) ;
\draw  [fill={rgb, 255:red, 255; green, 255; blue, 255 }  ,fill opacity=1 ] (236.93,1034.34) .. controls (236.93,1030.91) and (234.15,1028.13) .. (230.71,1028.13) .. controls (227.28,1028.13) and (224.5,1030.91) .. (224.5,1034.34) .. controls (224.5,1037.77) and (227.28,1040.55) .. (230.71,1040.55) .. controls (234.15,1040.55) and (236.93,1037.77) .. (236.93,1034.34) -- cycle ;
\draw  [dash pattern={on 0.84pt off 2.51pt}]  (236.43,1051.7) -- (240.43,1062.55) ;
\draw  [dash pattern={on 0.84pt off 2.51pt}]  (266.43,1051.7) -- (270.43,1062.55) ;
\draw  [dash pattern={on 0.84pt off 2.51pt}]  (296.43,1051.7) -- (300.43,1062.55) ;
\draw  [dash pattern={on 0.84pt off 2.51pt}]  (327.43,1051.7) -- (331.43,1062.55) ;
\draw  [dash pattern={on 0.84pt off 2.51pt}]  (357.43,1051.7) -- (361.43,1062.55) ;
\draw  [dash pattern={on 0.84pt off 2.51pt}]  (387.43,1051.7) -- (391.43,1062.55) ;
\draw  [dash pattern={on 0.84pt off 2.51pt}]  (417.43,1051.7) -- (421.43,1062.55) ;
\draw  [dash pattern={on 0.84pt off 2.51pt}]  (224.43,1051.7) -- (220.43,1060.84) ;
\draw  [dash pattern={on 0.84pt off 2.51pt}]  (254.43,1051.7) -- (250.43,1060.84) ;
\draw  [dash pattern={on 0.84pt off 2.51pt}]  (284.43,1051.7) -- (280.43,1060.84) ;
\draw  [dash pattern={on 0.84pt off 2.51pt}]  (315.43,1051.7) -- (311.43,1060.84) ;
\draw  [dash pattern={on 0.84pt off 2.51pt}]  (345.43,1051.7) -- (341.43,1060.84) ;
\draw  [dash pattern={on 0.84pt off 2.51pt}]  (375.43,1051.7) -- (371.43,1060.84) ;
\draw  [dash pattern={on 0.84pt off 2.51pt}]  (405.43,1051.7) -- (401.43,1060.84) ;

\draw (181,1088.41) node [anchor=north west][inner sep=0.75pt]  [font=\footnotesize]  {$0$};
\draw (418,1088.41) node [anchor=north west][inner sep=0.75pt]  [font=\footnotesize]  {$1$};

\end{tikzpicture}      \captionof{figure}{A depiction of \newline an infinite tree $\mathcal{T}_\infty$.}
      \label{fig:2}
    \end{minipage}
    \end{figure}

\begin{lem}\label{lem4}
    For any $C\in \mathcal{T}_\infty$,
    \[
        p(C_1)-p(C_2) \leq \int_C |f-f(C)|\leq 2\big(p(C_1)-p(C_2)\big).
    \]
\end{lem}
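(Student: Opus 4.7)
The plan is to work with $g := f - f(C)$, which is decreasing on $C=[a,b]$ with $\int_C g\,dx = 0$. Let $m := (a+b)/2$ and let $x^* := \sup\{x\in C : f(x)\geq f(C)\}$ be the sign-change point of $g$; since $f$ is decreasing, $g\geq 0$ on $[a,x^*]$ and $g\leq 0$ on $[x^*,b]$. Two basic identities underlie the whole argument: first, cancellation of the positive and negative parts (using $\int_C g = 0$) gives $\int_C |f - f(C)|\,dx = 2\int_a^{x^*} g\,dx$, and second, since $f(C)$ is constant and $\lambda(C')=\lambda(C'')$, we have $p(C')-p(C'') = \int_{C'} g\,dx - \int_{C''} g\,dx$.

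For the lower bound, monotonicity of $f$ gives $p(C')\geq p(C'')$, and the triangle inequality applied to the second identity above yields
\[
    p(C')-p(C'') = |p(C')-p(C'')|\leq \int_{C'}|g|\,dx + \int_{C''}|g|\,dx = \int_C|g|\,dx.
\]

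For the upper bound I would split on the location of $x^*$. Consider first the case $x^*\leq m$; the case $x^*\geq m$ is symmetric. Set $P := \int_a^{x^*} g\,dx$, $N_1 := \int_{x^*}^m (-g)\,dx$, and $N_2 := \int_m^b (-g)\,dx$; all three are nonnegative, and $\int_C g = 0$ yields $P = N_1 + N_2$. A direct computation then gives $\int_C |g|\,dx = P + N_1 + N_2 = 2P$ and $p(C')-p(C'') = (P - N_1) - (-N_2) = 2N_2$, so the desired upper bound reduces to the clean inequality $N_1\leq N_2$.

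The step I expect to be the main obstacle is this last inequality, for which I would use a reflection/shift argument that exploits both the monotonicity of $g$ and the fact that $[x^*, m]$ is no longer than $[m, b]$ (since $x^*\geq a$). For $x\in [x^*,m]$, set $y := x + (m-x^*) \in [m,\, 2m - x^*]\subseteq [m,b]$; since $-g$ is increasing, $-g(x)\leq -g(y)$, and integrating in $x$ over $[x^*,m]$ yields $N_1 \leq \int_m^{2m-x^*}(-g(y))\,dy \leq N_2$. For the symmetric case $x^*\geq m$, one applies the analogous shift $y := x - (x^*-m)$ from $[m,x^*]$ into $[2m-x^*, m]\subseteq [a,m]$, bounding the positive part of $g$ on $[m,x^*]$ by its positive part on $[a,m]$; the rest of the bookkeeping is identical.
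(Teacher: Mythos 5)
Your proof is correct and takes essentially the same route as the paper: you use the same decomposition at the sign-change point $x^*$ (your $P$, $N_1$, $N_2$ are exactly the paper's $A$, $B_1$, $B_2$), derive the same two identities, and reduce both inequalities to the single claim $N_1 \leq N_2$. The one place you diverge is in proving that claim: the paper argues it by a brief contradiction (``otherwise $p(C')-p(C'') < A$, which forces $x_0=0$\dots''), while your shift argument $y = x + (m - x^*)$ is more explicit and, frankly, more transparent than the paper's terse justification.
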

\begin{proof}
     Let $x_0 := \sup \{x\in C:f(x)\geq f(C)\}$. Without loss of generality, assume $C=[0,1]$ and $p(C)>0$. Our result is clear when $f$ is constant on $C$, so we assume otherwise. Assume first that $x_0< 1/2$, and define
    \[
        A:=\int_0^{x_0}\big(f-f(C)\big),\quad B_1:=\int_{x_0}^{1/2}\big(f(C)-f\big),\quad B_2:=\int_{1/2}^{1}\big(f(C)-f\big).
    \]
    Our assumption on $f$ guarantees that $A$, $B_1$ and $B_2$ are all positive.
    It is clear that $\int_c|f-f(C)|=A+B_1+B_2$ and $p(C_1)-p(C_2)=A+(B_2-B_1)$, which shows the leftmost inequality. Note that $B_2\geq B_1$, since otherwise we would have 
    \[
        p(C_1)-p(C_2)=A+(B_2-B_1)<A,
    \]
    which would only be possible $|x_0-1/2|\geq 1/2$, forcing $x_0=0$, $A=0$ and $f$ to be constant. Using the fact that $A=B_1+B_2$ (by definition of $x_0$), 
    \[
        2\big(p(C_1)-p(C_2)\big)=A+B_1+B_2+2(B_2-B_1)\geq \int_C|f-f(C)|.
    \]
    The case $x_0>1/2$ can be taken care of similarly.
\end{proof}

\begin{lem} \label{lem5} Let $\ell\in \mathbb{Z}^+$ be fixed, and let $\mathcal{A}_\ell$ be the set of nodes in $\mathcal{T}_\infty$ of depth $\ell$. Then
\[
    \sum_{C\in \mathcal{A}_\ell}\big(p(C_1)-p(C_2)\big)\leq \frac{B}{2^{\ell+1}}.
\]
\end{lem}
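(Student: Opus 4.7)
\smallskip
\noindent\textbf{Proof plan for Lemma \ref{lem5}.} The idea is to refine the partition one more level and then telescope. I would introduce the depth-$(\ell+1)$ dyadic partition $\{I_j\}_{j=0}^{2^{\ell+1}-1}$ of $[0,1]$, where $I_j = [j/2^{\ell+1},(j+1)/2^{\ell+1}]$, and set $p_j := p(I_j) = \int_{I_j} f$. Each depth-$\ell$ node $C_k \in \mathcal{A}_\ell$ splits into $C_k' = I_{2k}$ and $C_k'' = I_{2k+1}$, so that $p(C_k') - p(C_k'') = p_{2k} - p_{2k+1}$. The claim becomes $\sum_{k=0}^{2^\ell-1} (p_{2k}-p_{2k+1}) \leq B/2^{\ell+1}$.

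Now I would use the monotonicity of $f$: since $f$ is decreasing, $p_j$ is nonincreasing in $j$, and in particular $p_{2k+1} - p_{2k+2} \geq 0$ for every $k$. Therefore I may add these nonnegative ``interleaved'' differences to the target sum without decreasing it, which forces a telescope:
\[
    \sum_{k=0}^{2^\ell - 1}(p_{2k}-p_{2k+1})
    \;\leq\; \sum_{k=0}^{2^\ell-1}(p_{2k}-p_{2k+1}) + \sum_{k=0}^{2^\ell-2}(p_{2k+1}-p_{2k+2})
    \;=\; p_0 - p_{2^{\ell+1}-1}
    \;\leq\; p_0.
\]
Finally, since $f \leq B$ on $[0,1]$, the leftmost subinterval satisfies $p_0 \leq B \cdot \lambda(I_0) = B/2^{\ell+1}$, which yields the claimed bound.

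I do not expect a serious obstacle here: the only ingredients are (i) rewriting each $p(C')-p(C'')$ as a difference of consecutive depth-$(\ell+1)$ masses, and (ii) using monotonicity to collapse the sum to $p_0 - p_{2^{\ell+1}-1}$. The only thing worth double-checking is that one uses $f(0) \leq B$ together with the length of $I_0$ to bound $p_0$, rather than a cruder estimate of the form $p_0 \leq 1$; this is what gives the geometric decay $B/2^{\ell+1}$ in $\ell$ that the later steps of the proof of Proposition \ref{prop} will sum over.
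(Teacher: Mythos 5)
Your proof is correct and is essentially the paper's argument: the paper also telescopes the alternating sum $\sum_i \big(p(C_i')-p(C_i'')\big)$ down to $p(C_1')-p(C_{2^\ell}'')\leq p(C_1')\leq B/2^{\ell+1}$, using monotonicity exactly as you do (the ``interleaved'' nonnegative terms $p_{2k+1}-p_{2k+2}$ you add are what makes the paper's single-line inequality valid). You have just spelled out the index bookkeeping at depth $\ell+1$ more explicitly.
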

\begin{proof}
    Let $\{C_i\}_{i=1}^{2^\ell}$ be an enumeration of $\mathcal{A}_\ell$ from left to right (where the leftmost node has $0$ as one of its interval endpoints). We have
    \begin{align*}
        \sum_{C \in \mathcal{A}_\ell} \big(p(C_1)-p(C_2)\big) &= \sum_{i=1}^{2^\ell} \big(p(C_i')-p(C_i'')\big) \\
        &\leq p(C_1')-p(C_{2^\ell}'') \leq p(C_1') \leq \frac{B}{2^{\ell+1}}.
    \end{align*}
\end{proof}

\begin{lem} \label{lem6} Let $\ell\in \mathbb{Z}^+$ be fixed, and let $\mathcal{A}_\ell$ be the set of nodes in $\mathcal{T}_\infty$ of depth $\ell$. Then
\[
    \sum_{C\in \mathcal{A}_\ell}\sqrt{\frac{p(C)}{n}}\leq 2^{\ell/2} \sqrt{\frac{B}{n}}.
\]
\end{lem}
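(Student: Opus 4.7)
The plan is to apply the Cauchy--Schwarz inequality to the $2^\ell$-term sum. The level-$\ell$ nodes of $\mathcal{T}_\infty$ are precisely the intervals $[(i-1)/2^\ell, i/2^\ell]$ for $i = 1, \ldots, 2^\ell$, so $|\mathcal{A}_\ell| = 2^\ell$ and the elements of $\mathcal{A}_\ell$ partition $[0,1]$, giving $\sum_{C \in \mathcal{A}_\ell} p(C) = \int_0^1 f = 1$.

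First I would apply Cauchy--Schwarz with a dummy weight $1$ paired against $\sqrt{p(C)/n}$:
\[
\sum_{C \in \mathcal{A}_\ell} \sqrt{\frac{p(C)}{n}} \;\leq\; \sqrt{|\mathcal{A}_\ell|} \cdot \sqrt{\sum_{C \in \mathcal{A}_\ell} \frac{p(C)}{n}} \;=\; \sqrt{\frac{2^\ell}{n}}.
\]
To upgrade this to the stated bound $2^{\ell/2}\sqrt{B/n}$, I would then invoke the fact that any monotone decreasing density $f$ on $[0,1]$ with $f(0) = B$ satisfies $1 = \int_0^1 f \leq B \cdot \lambda([0,1]) = B$, so $B \geq 1$ and hence $\sqrt{2^\ell/n} \leq 2^{\ell/2}\sqrt{B/n}$.

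There is essentially no obstacle here: the argument is a one-line application of Cauchy--Schwarz together with the elementary observation that $B \geq 1$. The factor $\sqrt{B}$ in the statement is presumably retained for notational uniformity with Lemma \ref{lem5} (which also carries a $B$), so that the two lemmas combine cleanly in the subsequent proof of Proposition \ref{prop}.
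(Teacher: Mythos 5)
Your proof is correct, and it takes a genuinely different, somewhat more elementary route than the paper's. You apply the discrete Cauchy--Schwarz inequality to the level-$\ell$ sum directly, obtaining the $B$-free bound $\sqrt{2^\ell/n}$, and then introduce the factor $\sqrt{B}$ via the observation that a decreasing density on $[0,1]$ with mode value $B$ necessarily satisfies $B\geq\int_0^1 f=1$. The paper instead works cell by cell: Jensen's inequality gives $\sqrt{p(C)}=\sqrt{\lambda(C)f(C)}\leq\int_C\sqrt{f}/\sqrt{\lambda(C)}$ for each $C\in\mathcal{A}_\ell$, and summing these bounds and then using $\int_0^1\sqrt{f}\leq\sqrt{B}$ (from the pointwise bound $f\leq B$) completes the proof. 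Both arguments are one-liners and give the same final inequality, but your route makes explicit that the $B$-dependence in Lemma~\ref{lem6} is not intrinsic: the sharper bound $\sqrt{2^\ell/n}$ holds for any probability density, not just bounded monotone ones. Your reading of why the $\sqrt{B}$ is nonetheless kept is sound --- it lets Lemmas~\ref{lem5} and~\ref{lem6} feed symmetrically into the $\min$ in equation~(\ref{crossingsum}), so that the crossover index $\ell^*$ has a tidy closed form.
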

\begin{proof}
    By Jensen's inequality, $\sqrt{f(C)}\leq \int_C \sqrt{f}/\lambda(C)$ and
    \[
        \sqrt{p(C)}=\sqrt{\lambda(C)f(C)}\leq \frac{\int_C \sqrt{f}}{\sqrt{\lambda(C)}}.
    \]
    It follows that 
    \[
        \sum_{C\in \mathcal{A}_\ell}\sqrt{\frac{p(C)}{n}}\leq 2^{\ell/2} \frac{1}{\sqrt{n}}\int_C\sqrt{f} \leq 
        2^{\ell/2} \sqrt{\frac{B}{n}}.
    \]
\end{proof}

\subsection{Proof of proposition \ref{prop}}

Armed with these lemmas, we may now prove Proposition \ref{prop}.

\begin{proof} The $L_1$ distance between $f$ and $f_n$ on the whole of $[0,1]$ can be computed by summing the error over the leaf set $\mathcal{B}\cap \mathcal{P}_0$, and is thus equal to
\[
    \sum_{C\in \mathcal{B}\cap \mathcal{P}_0}\int_{C}|f-f(C)|.
\]
Using Lemma $\ref{lem4}$ and the definition $\mathcal{B}\cap \mathcal{P}_0$, we can upper bound this quantity and write
\begin{align}
    \sum_{C\in \mathcal{B}\cap \mathcal{P}_0}\int_{C}|f-f(C)|\leq 2\cdot\sum_{C\in \mathcal{B}\cap \mathcal{P}_0}\big(p(C_1)-p(C_2)\big) \leq 2\cdot\sum_{C\in\mathcal{B}\cap \mathcal{P}_0 } \gamma \sqrt{\frac{p(C)}{n}}.
\end{align}
By Lemmas \ref{lem5} and \ref{lem6},
\begin{equation}\label{crossingsum}
     \sum_{C\in \mathcal{B}\cap \mathcal{P}_0}\int_{C}|f-f(C)|\leq 2\cdot \sum_{\ell=0}^\infty \, \min\left(\frac{B}{2^{\ell+1}},\gamma \cdot 2^{\ell/2}\sqrt{\frac{B}{n}}\right).
\end{equation}
Recall that $\ell^*=\min\{\ell \in \mathbb{Z}_{>0} : {B}/{2^{\ell+1}} \leq \gamma \cdot 2^{\ell/2}\sqrt{{{B}/{n}}}\}$ and note that $\ell^*$ is within 1 of 
\[  
    \log_2\left\{ \bigg(\frac{Bn}{4}\bigg)^{1/3} \bigg(\frac{1}{\gamma}\bigg)^{2/3}  \right\},
\]
and that the summation in ($\ref{crossingsum}$) is bounded above by
\begin{align*}
    2 \left(\sum_{\ell=0}^{\ell^*-1}\gamma\sqrt{\frac{B}{n}}2^{\ell/2}+\sum_{\ell=\ell^*}^\infty\frac{B}{2^{\ell+1}}\right) &\leq 2\gamma \sqrt{\frac{B}{n}}2^{(\ell^*-1)/2}\left(\frac{1}{1-1/\sqrt{2}}\right)+\frac{2B}{2^{\ell^*}}\\
    &\leq \frac{\gamma^{2/3}B^{2/3}}{n^{1/3}}\left(\frac{2^{7/6}}{(\sqrt{2}-1)}+{2^{5/3}}\right).
\end{align*}
This non-asymptotic bound is uniform over all bounded monotone densities $f$. 

\end{proof}

\subsection{Additional results regarding the infinite tree}

We conclude this section by stating a few properties of $\mathcal{T}_\infty$ in the following lemmas, which are proved in the appendix. 
The first is a deterministic bound on the number of unbalanced nodes, both at a given level $\ell\geq \ell^*$ (equation \eqref{eq:unbalancedL}) and in general (equations \eqref{eq:unbalanced_general}, \eqref{eq:unbalanced_alpha}). The second is a bound on the number of nodes in $\mathcal{T}_\infty$ with exactly $j$ balanced ancestors for a given positive integer $j$.

\begin{lem}\label{lem7}
    Let $\mathcal{A}_\ell$ denote the set of nodes in $\mathcal{T}_\infty$ of depth $\ell$. If $\ell\geq \ell^*$,
    \begin{equation}\label{eq:unbalancedL}
                |\mathcal{A}_\ell \setminus \mathcal{B}| \leq \frac{2\sqrt{2}}{\gamma}\frac{\sqrt{Bn}}{{2^{\ell/2}}},
    \end{equation}
    and 
    \begin{equation}\label{eq:unbalanced_general}
                |\mathcal{B}^c|\leq \frac{5 B^{1/3}n^{1/3}}{\gamma^{2/3}}.
    \end{equation}
    Furthermore, 
    \begin{equation}\label{eq:unbalanced_alpha}
            \sup_{\alpha>0}  \alpha \big|\big(\mathcal{B}^{(\alpha)}\big)^c\big| \leq \frac{5B^{1/3}n^{1/3}}{\gamma^{2/3}}.
    \end{equation}
\end{lem}
\begin{lem}\label{lem8}
    Recall that $\mathcal{B}$ is the subset of balanced nodes and that $\mathcal{B}^c$ is the subset of unbalanced nodes of $\mathcal{T}_\infty$.
    For any $j\in \mathbb{Z}_{>0}$, define $\mathcal{P}_j=\mathcal{P}_j(\mathcal{T}_\infty)$ to be the set of nodes of $\mathcal{T}_\infty$ with exactly $j$ ancestors in $\mathcal{B}$, then
    \[
        |\mathcal{P}_j| \leq (|\mathcal{B}^c|+1)\cdot2^j.
    \]
\end{lem}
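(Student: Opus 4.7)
The plan is to bound $|\mathcal{P}_j|$ by decomposing each node through its parent. Any $C\in \mathcal{P}_j$ with $j\geq 1$ is not the root, so has a well-defined parent $P$ in $\mathcal{T}_\infty$. If $P$ is balanced, then $C$'s balanced ancestors are exactly $P$'s balanced ancestors together with $P$ itself, so $P \in \mathcal{P}_{j-1}\cap \mathcal{B}$; if $P$ is unbalanced, then $C$ and $P$ share the same balanced ancestors, placing $P \in \mathcal{P}_j\cap \mathcal{B}^c$. Since every node of $\mathcal{T}_\infty$ has exactly two children, and both belong to $\mathcal{P}_j$ in either case, writing $b_j := |\mathcal{P}_j\cap \mathcal{B}|$ and $u_j := |\mathcal{P}_j\cap \mathcal{B}^c|$ gives the exact identity
\[
    |\mathcal{P}_j| = 2 b_{j-1} + 2 u_j, \qquad \text{equivalently} \qquad b_j = 2 b_{j-1} + u_j,
\]
which unrolls to $b_j = 2^j b_0 + \sum_{i=1}^j 2^{j-i} u_i$.

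Next I would establish a sharp identity for $b_0$. The set $\mathcal{P}_0$ is itself a subtree of $\mathcal{T}_\infty$: an unbalanced node of $\mathcal{P}_0$ has both children again in $\mathcal{P}_0$, while a balanced node of $\mathcal{P}_0$ is a leaf of this subtree. By Lemma \ref{lem7} this subtree is finite, and being inherited from a full binary tree it is also full-binary, so $b_0 = u_0 + 1$. Moreover, each unbalanced node has a well-defined number of balanced ancestors, so the sets $\{\mathcal{P}_i\cap \mathcal{B}^c\}_{i \geq 0}$ partition $\mathcal{B}^c$, giving $u_0 = |\mathcal{B}^c|-\sum_{i\geq 1}u_i$ and hence
\[
    b_0 = |\mathcal{B}^c|+1-\sum_{i\geq 1}u_i.
\]

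Substituting this into the unrolled recursion, collecting the two appearances of each $u_i$, and adding $u_j$ to form $|\mathcal{P}_j| = b_j + u_j$, the expression becomes
\[
    |\mathcal{P}_j| = 2^j(|\mathcal{B}^c|+1) - 2^j\sum_{i>j}u_i - (2^j-2)u_j - \sum_{i=1}^{j-1}(2^j-2^{j-i})u_i,
\]
and for $j\geq 1$ each of the three subtracted terms is non-negative, which yields the claimed bound. The main obstacle is precisely the sharpness of the estimate for $b_0$: the crude inequality $b_0 \leq |\mathcal{B}^c|+1$ leaves $|\mathcal{P}_j|$ off by an additive $2|\mathcal{B}^c|$, so it is essential to exploit that unbalanced nodes already accounted for at deeper levels (i.e., contributing to $u_i$ with $i \geq 1$) must be subtracted from $b_0$; the partition of $\mathcal{B}^c$ by the depth of the first balanced ancestor is what makes the final cancellation work.
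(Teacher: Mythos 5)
Your proof is correct, and it takes a genuinely different route from the paper. The paper argues by rearrangement: it first establishes the monotonicity $|\mathcal{P}_i|\leq |\mathcal{P}_{i+1}|$, then shows that swapping a balanced node with its unbalanced parent never decreases any $|\mathcal{P}_j|$, and concludes that the extremal configuration is the one with all unbalanced nodes pushed to the top, where the bound is read off directly. You instead track each node through its parent, obtaining the exact recursion $b_j = 2b_{j-1}+u_j$, pin down $b_0 = u_0 + 1 = |\mathcal{B}^c|+1-\sum_{i\geq 1}u_i$ via the full-binary-tree leaf count of the finite subtree $\mathcal{P}_0$ and the partition of $\mathcal{B}^c$ by depth of first balanced ancestor, and close by explicit cancellation. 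The two arguments have complementary strengths: the paper's is slicker once one sees the right monovariant and makes the extremal configuration vivid, but it leaves the intermediate monotonicity claim somewhat informal; yours is more mechanical but self-contained, produces an exact identity for $|\mathcal{P}_j|$, and makes transparent exactly when equality holds (namely $u_i=0$ for all $i\geq 1$), which is precisely the paper's extremal tree. One small thing worth stating explicitly in a final write-up: the parent decomposition $|\mathcal{P}_j|=2b_{j-1}+2u_j$ uses that $\mathcal{T}_\infty$ is the full infinite binary tree, so every node has exactly two children; this is of course true here but deserves a word.
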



\section{Proof of Theorem \ref{maintheorem}}

Using the results above, we return to the proof of Theorem \ref{maintheorem}. The \textit{expected} $L_1$ distance between $f$ and $f_n$ (as defined previously) is computed by summing over $T_n$'s leaf set, denoted by $L$. By Scheffé's identity (see Devroye and Györfi, 1985 \cite{l1view}), we have
\begin{align} \label{eq:eq1}
    \E\left\{\int_0^1|f-f_n|\right\}= 2\cdot\E\left\{\int_0^1(f-f_n)_+\right\}
\end{align}
where $(x)_+:=\max(x,0)$. Now, (\ref{eq:eq1}) is bounded from above by
\[
    2\underbrace{\E\Bigg\{\sum_{C\in L}\int_C(f-f(C))_+\Bigg\}}_{\mbox{(I)}}+2\underbrace{\E\Bigg\{\sum_{C\in L}\int_{C}\Big(f(C)-\frac{N(C)/n}{\lambda(C)}\Big)_+\Bigg\}}_{\mbox{(II)}}.
\]
Here $N(C)$ is the number of data points in $C$.
We bound each of these terms separately. 

We view $T_n$ as a sub-tree of $\mathcal{T}_\infty$. This allows us to recycle most of the notation introduced above. For instance, leaves of $T_n$ with depth $\ell$ are the elements of $L\cap \mathcal{A}_\ell$, while leaves that are balanced are elements of $L\cap \mathcal{B}$. 
 
\subsection{Upper bound for (I)}

We begin with a few preliminary results.

\begin{lem}\label{lem9}
    Let $C$ be any non-leaf node of $T_n$ with depth $\ell$ and let $D\subseteq L$ be the set of leaves of the sub-tree rooted at $C$, then
    \[
        \sum_{C^*\in D}\int_{C^*}\big(f-f(C^*)\big)_+\leq \int_C\big(f-f(C)\big)_+.
    \]
\end{lem}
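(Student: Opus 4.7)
The plan is induction on the height of the subtree of $T_n$ rooted at $C$. When $C$ is itself a leaf, $D=\{C\}$ and the inequality is an equality. For the inductive step, let $C'$ and $C''$ be the two children of $C$ in $T_n$ (the left and right halves of $C$), and partition $D = D' \sqcup D''$ according to whether each leaf descends from $C'$ or from $C''$. The induction hypothesis applied to the two strictly smaller subtrees rooted at $C'$ and $C''$ yields
\begin{equation*}
\sum_{C^*\in D'}\!\int_{C^*}\!(f-f(C^*))_+ \leq \int_{C'}\!(f-f(C'))_+, \qquad \sum_{C^*\in D''}\!\int_{C^*}\!(f-f(C^*))_+ \leq \int_{C''}\!(f-f(C''))_+,
\end{equation*}
so summing reduces the whole claim to the one-step refinement inequality
\begin{equation*}
\int_{C'}(f-f(C'))_+ + \int_{C''}(f-f(C''))_+ \;\leq\; \int_C(f-f(C))_+.
\end{equation*}

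This one-step inequality is the technical heart of the argument. The two ingredients available are (a) the balance identity $f(C) = \tfrac{1}{2}(f(C') + f(C''))$, which follows from $\lambda(C')=\lambda(C'')$, and (b) the monotonicity-induced ordering $f(C')\geq f(C)\geq f(C'')$. Setting $a = f(C)$ and $\delta = f(C')-a = a-f(C'')$, the layer-cake identity $\int_I (f-c)_+ = \int_0^\infty \lambda(\{f>c+t\}\cap I)\,dt$ rewrites each positive-excess integral as an integral of superlevel-set measures. After shifting thresholds so that all three integrals are referenced to the common value $a$, the one-step inequality becomes
\begin{equation*}
\int_0^{\delta}\bigl[\lambda(\{f>a-t\}\cap C'') \;-\; \lambda(\{f>a+t\}\cap C')\bigr]\,dt \;\leq\; 0.
\end{equation*}
Because $f$ is decreasing, each superlevel set $\{f>s\}$ is an initial segment $[0,y_s]$; the plan is then to use (a) together with this initial-segment structure to control where the level crossings $y_{a\pm t}$ sit relative to the midpoint $M$ of $C$, and to conclude the comparison.

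The main obstacle will be the regime in which, for small $t$, the crossings $y_{a+t}$ and $y_{a-t}$ lie on opposite sides of $M$: monotonicity alone does not give a termwise comparison of the two level-set measures in $t$, and the balance identity (a) has to be brought in aggregately (integrated against $dt$ over $[0,\delta]$) rather than pointwise. If this direct route proves too delicate, a backup strategy is to bound each $\int_{C^*}(f-f(C^*))_+$ above by $p(C^*_L)-p(C^*_R)$ using Lemma~\ref{lem4}, telescope these imbalances through the subtree (they behave well as one descends, by the decreasing property of $f$, much as in the summation argument used in Lemma~\ref{lem5}), and then invoke Lemma~\ref{lem4} in the reverse direction at $C$; this fallback would recover the inequality, possibly with an implicit absolute constant that can be absorbed downstream when bounding term~(I).
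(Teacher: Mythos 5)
Your inductive reduction to the one-step refinement inequality is exactly the paper's first move, and your layer-cake rewriting of that step — aligning all three positive-part integrals at the common level $a=f(C)$ and reducing the claim to $\int_0^{\delta}\bigl[\lambda(\{f>a-t\}\cap C'') - \lambda(\{f>a+t\}\cap C')\bigr]\,dt \le 0$ — is a correct and equivalent reformulation. But that integral inequality \emph{is} the lemma; everything before it used only linearity. You then explicitly leave it unresolved, flagging the cross-midpoint regime as the obstacle and saying the balance constraint must be used ``aggregately'' without showing how. The paper closes precisely this step: it names the left hump $A=\int_{C_1}(f-f(C_1))_+$ and the right dip $B=\int_{C_2}(f-f(C_2))_+$, observes $f(C_2)\le f(C)\le f(C_1)$ with $x_1\le x_0\le x_2$, and compares $A$ and $B$ against the two equal representations of $\int_C(f-f(C))_+$ (as a hump above $f(C)$ and as a dip below it), reading off $A+B\le\int_C(f-f(C))_+$ from its Figure. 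Conceptually you are looking at the same object; the paper finishes the comparison and you do not. Note also that the one-step inequality does \emph{not} need the balance $\lambda(C')=\lambda(C'')$ — the paper proves it for arbitrary two-interval partitions of $C$ — so leaning on the identity $f(C)=\tfrac12(f(C')+f(C''))$ as an essential ``ingredient'' is a red herring.

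The fallback does not rescue the proof. Lemma 4 gives $\int_{C^*}(f-f(C^*))_+ \le p(C^{*\prime})-p(C^{*\prime\prime})$ at each leaf and $p(C')-p(C'')\le 2\int_C(f-f(C))_+$ at the root, so even if the middle step worked you would only get Lemma 9 with an extra factor of $2$, which is strictly weaker than what is stated. And the middle step is itself shaky: Lemma 5's telescoping cancels $p(C_i'')\ge p(C_{i+1}')$ between \emph{equal-width} adjacent half-intervals at a single depth, whereas the leaves in $D$ sit at varying depths, and when the left leaf is deeper than its right neighbour the half-interval $C^{*\prime\prime}_i$ is strictly shorter than $C^{*\prime}_{i+1}$ and can carry strictly less mass (take $f$ locally constant near their common boundary for a counterexample). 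So the backup is both a quantitative weakening and structurally incomplete. The missing content is a genuine proof of the exact one-step refinement inequality, which is what the paper's geometric comparison of $A$, $B$, and the parent hump/dip supplies.
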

    \begin{figure}
        \centering

\tikzset{every picture/.style={line width=0.75pt}} 

\begin{tikzpicture}[x=0.75pt,y=0.75pt,yscale=-1,xscale=1]

\draw    (215.06,498.77) -- (443.77,498.77) ;
\draw [shift={(445.77,498.77)}, rotate = 180] [color={rgb, 255:red, 0; green, 0; blue, 0 }  ][line width=0.75]    (7.65,-2.3) .. controls (4.86,-0.97) and (2.31,-0.21) .. (0,0) .. controls (2.31,0.21) and (4.86,0.98) .. (7.65,2.3)   ;
\draw    (225.02,502.47) -- (225.02,354.85) ;
\draw [shift={(225.02,352.85)}, rotate = 90] [color={rgb, 255:red, 0; green, 0; blue, 0 }  ][line width=0.75]    (7.65,-2.3) .. controls (4.86,-0.97) and (2.31,-0.21) .. (0,0) .. controls (2.31,0.21) and (4.86,0.98) .. (7.65,2.3)   ;
\draw    (347.89,354.5) -- (347.89,502.47) ;
\draw    (414.3,354.5) -- (414.3,503.34) ;
\draw    (225.5,369.28) .. controls (295.23,371.01) and (316.26,392.63) .. (335.08,422.04) .. controls (353.89,451.44) and (373.82,467.88) .. (414.77,468.74) ;
\draw    (214.43,389.09) -- (347.25,389.09) ;
\draw    (347.89,460.71) -- (424.73,460.71) ;
\draw   (225.02,505.08) .. controls (225.02,509.75) and (227.35,512.08) .. (232.02,512.08) -- (276.39,512.08) .. controls (283.06,512.08) and (286.39,514.41) .. (286.39,519.08) .. controls (286.39,514.41) and (289.72,512.08) .. (296.39,512.08)(293.39,512.08) -- (340.25,512.08) .. controls (344.92,512.08) and (347.25,509.75) .. (347.25,505.08) ;
\draw   (347.89,505.08) .. controls (347.89,509.75) and (350.22,512.08) .. (354.89,512.08) -- (371.42,512.08) .. controls (378.09,512.08) and (381.42,514.41) .. (381.42,519.08) .. controls (381.42,514.41) and (384.75,512.08) .. (391.42,512.08)(388.42,512.08) -- (407.77,512.08) .. controls (412.44,512.08) and (414.77,509.75) .. (414.77,505.08) ;
\draw    (214.43,414.3) -- (414.77,413.47) ;
\draw [color={rgb, 255:red, 208; green, 2; blue, 27 }  ,draw opacity=1 ]   (225.5,369.28) -- (225.5,388.52) ;
\draw [color={rgb, 255:red, 208; green, 2; blue, 27 }  ,draw opacity=1 ]   (225.5,388.52) -- (304.09,388.52) ;
\draw [color={rgb, 255:red, 208; green, 2; blue, 27 }  ,draw opacity=1 ]   (225.5,369.28) .. controls (281.95,371.05) and (293.02,383.38) .. (304.09,388.52) ;
\draw [color={rgb, 255:red, 74; green, 144; blue, 226 }  ,draw opacity=1 ]   (375.56,461.18) -- (414.77,461.18) ;
\draw [color={rgb, 255:red, 74; green, 144; blue, 226 }  ,draw opacity=1 ]   (414.77,468.74) -- (414.77,461.18) ;
\draw [color={rgb, 255:red, 74; green, 144; blue, 226 }  ,draw opacity=1 ]   (375.56,461.18) .. controls (391.53,466.61) and (401.49,468.66) .. (414.77,468.74) ;
\draw [color={rgb, 255:red, 208; green, 2; blue, 27 }  ,draw opacity=1 ]   (256.49,360.77) .. controls (246.2,360.77) and (247.39,370.55) .. (247.61,379.35) ;
\draw [shift={(247.63,381.32)}, rotate = 270] [color={rgb, 255:red, 208; green, 2; blue, 27 }  ,draw opacity=1 ][line width=0.75]    (6.56,-1.97) .. controls (4.17,-0.84) and (1.99,-0.18) .. (0,0) .. controls (1.99,0.18) and (4.17,0.84) .. (6.56,1.97)   ;
\draw [color={rgb, 255:red, 74; green, 144; blue, 226 }  ,draw opacity=1 ]   (392.64,445.96) .. controls (400.87,445.96) and (404.32,454.03) .. (405.65,462.53) ;
\draw [shift={(405.92,464.46)}, rotate = 263.12] [color={rgb, 255:red, 74; green, 144; blue, 226 }  ,draw opacity=1 ][line width=0.75]    (6.56,-1.97) .. controls (4.17,-0.84) and (1.99,-0.18) .. (0,0) .. controls (1.99,0.18) and (4.17,0.84) .. (6.56,1.97)   ;
\draw   (413.67,349.83) .. controls (413.67,345.16) and (411.34,342.83) .. (406.67,342.83) -- (335.25,342.83) .. controls (328.58,342.83) and (325.25,340.5) .. (325.25,335.83) .. controls (325.25,340.5) and (321.92,342.83) .. (315.25,342.83)(318.25,342.83) -- (232.5,342.83) .. controls (227.83,342.83) and (225.5,345.16) .. (225.5,349.83) ;
\draw  [dash pattern={on 4.5pt off 4.5pt}]  (304.09,388.52) -- (304.09,497.98) ;
\draw  [dash pattern={on 4.5pt off 4.5pt}]  (375.56,461.18) -- (375.56,498.98) ;

\draw (275.95,523.53) node [anchor=north west][inner sep=0.75pt]  [font=\footnotesize]  {$C_{1}$};
\draw (372.25,523.28) node [anchor=north west][inner sep=0.75pt]  [font=\footnotesize]  {$C_{2}$};
\draw (173.87,378.28) node [anchor=north west][inner sep=0.75pt]  [font=\footnotesize]  {$f( C_{1})$};
\draw (180.19,402.92) node [anchor=north west][inner sep=0.75pt]  [font=\footnotesize]  {$f( C)$};
\draw (429.56,451.24) node [anchor=north west][inner sep=0.75pt]  [font=\footnotesize]  {$f( C_{2})$};
\draw (257.92,350.41) node [anchor=north west][inner sep=0.75pt]  [font=\scriptsize,color={rgb, 255:red, 208; green, 2; blue, 27 }  ,opacity=1 ]  {$A$};
\draw (379.68,437.75) node [anchor=north west][inner sep=0.75pt]  [font=\scriptsize,color={rgb, 255:red, 74; green, 144; blue, 226 }  ,opacity=1 ]  {$B$};
\draw (319.12,317.66) node [anchor=north west][inner sep=0.75pt]  [font=\footnotesize]  {$C$};
\draw (285.43,485.55) node [anchor=north west][inner sep=0.75pt]  [font=\footnotesize]  {$x_{1}$};
\draw (357.43,485.55) node [anchor=north west][inner sep=0.75pt]  [font=\footnotesize]  {$x_{2}$};

\end{tikzpicture}        \caption{Definitions used in the proof of Lemma \ref{lem9}.}
        \label{fig:3.5}
    \end{figure}
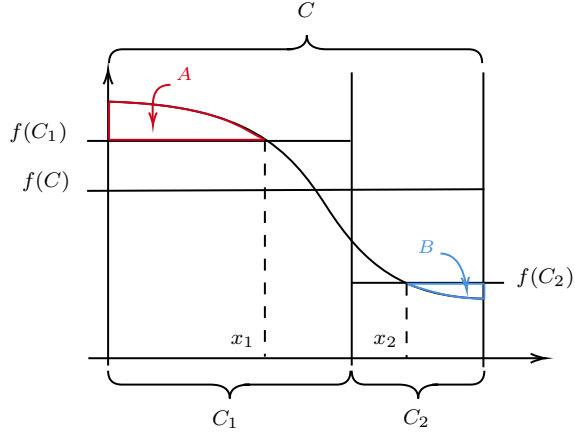
\begin{proof}
    See appendix.
\end{proof}

\begin{lem}\label{lem10}
    Let $C\in T_n \setminus \mathcal{B}^{(\sqrt{2})}$, and $\xi(C):=p(C_1)-p(C_2)-\gamma\sqrt{{2p(C)}/{n}}>0$. Then for such $C$, we have
    \[
        \P\{C\in L\} \leq \frac{2p(C)}{2p(C)+n\xi(C)^2}+\frac{4}{np(C)}.
    \]
\end{lem}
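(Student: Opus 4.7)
My plan is to observe that $\{C\in L\}$ forces the algorithm's splitting criterion to fail at $C$, and then to control the random threshold $\gamma\sqrt{N(C)}$ by truncating on the event that $N(C)$ is not abnormally large. Since a node can only be a leaf if the algorithm reached it and elected not to split, I would first note the inclusion
\[
    \{C\in L\}\subseteq \{N(C')-N(C'')\leq \gamma\sqrt{N(C)}\}
\]
and then decompose, writing $M:=N(C')-N(C'')$,
\[
    \P\{M\leq \gamma\sqrt{N(C)}\}\leq \P\{M\leq \gamma\sqrt{2np(C)}\}+\P\{N(C)>2np(C)\}.
\]

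For the second term, $N(C)\sim\mathrm{Binomial}(n,p(C))$ with mean $np(C)$ and variance $np(C)(1-p(C))\leq np(C)$, so Chebyshev gives $\P\{N(C)>2np(C)\}\leq 1/(np(C))\leq 4/(np(C))$. For the first term, I would express $M=\sum_{i=1}^n Y_i$ with i.i.d.\ summands $Y_i:=\1_{C'}(X_i)-\1_{C''}(X_i)$, so that $\E[M]=n(p(C')-p(C''))$ and $\mathrm{Var}(M)=n[p(C)-(p(C')-p(C''))^2]\leq np(C)$. The assumption $C\in T_n\setminus\mathcal{B}^{(\sqrt{2})}$ is precisely what makes
\[
    \E[M]-\gamma\sqrt{2np(C)}=n\xi(C)>0,
\]
so $\{M\leq \gamma\sqrt{2np(C)}\}$ is a one-sided deviation of $M$ below its mean by at least $n\xi(C)$, and Cantelli's (one-sided Chebyshev) inequality yields
\[
    \P\{M\leq \gamma\sqrt{2np(C)}\}\leq \frac{\mathrm{Var}(M)}{\mathrm{Var}(M)+n^2\xi(C)^2}\leq \frac{np(C)}{np(C)+n^2\xi(C)^2}\leq \frac{2p(C)}{2p(C)+n\xi(C)^2},
\]
the last step using monotonicity of $x\mapsto x/(x+a)$ for fixed $a=n^2\xi(C)^2>0$. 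Summing the two contributions gives the claim.

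The only real subtlety is the truncation at $N(C)\leq 2np(C)$: without it, the random threshold $\gamma\sqrt{N(C)}$ is dangerously correlated with the sum $M$ we want to concentrate, and we cannot apply Cantelli cleanly. Once the threshold is replaced by the deterministic $\gamma\sqrt{2np(C)}$, the computation is a standard second-moment estimate, and the choice to use Cantelli rather than plain Chebyshev is what produces the denominator $\mathrm{Var}(M)+n^2\xi(C)^2$ that matches the form in the statement.
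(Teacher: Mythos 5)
Your argument is correct and arrives at the stated bound (in fact with a slightly better constant, $p(C)/(p(C)+n\xi(C)^2)\leq 2p(C)/(2p(C)+n\xi(C)^2)$, and with $1/(np(C))$ rather than $4/(np(C))$ for the tail term). The route differs from the paper's in how it dispatches the random threshold $\gamma\sqrt{N(C)}$: the paper conditions on $N(C)=m$, observes that given $N(C)=m$ the variable $N(C')$ is binomial with $m$ trials and success probability $p(C')/p(C)$, applies Chebyshev--Cantelli to that conditional law, and then takes a supremum over $m\geq np(C)/2$ (absorbing $\P\{|N(C)-np(C)|>np(C)/2\}$ via Chebyshev). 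You instead replace the random threshold by the deterministic $\gamma\sqrt{2np(C)}$ on the event $\{N(C)\leq 2np(C)\}$, treat $M=N(C')-N(C'')=\sum_i Y_i$ directly as a sum of i.i.d.\ $\{-1,0,1\}$-valued variables, compute $\mathrm{Var}(M)=n\bigl(p(C)-(p(C')-p(C''))^2\bigr)\leq np(C)$, and apply Cantelli once, unconditionally. Your version avoids the conditional-variance bookkeeping and the supremum over $m$, at the cost of a one-sided rather than two-sided truncation on $N(C)$, which is all that is needed here. Both proofs hinge on exactly the same ingredients (Cantelli for the main term, Chebyshev for the tail), so the difference is organizational rather than conceptual, but your decomposition is arguably the cleaner of the two.
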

\begin{proof}
    See appendix.
\end{proof}

Using these lemmas, we prove the following proposition.
\begin{prop}\label{prop11}
    \[
        \sup_{f\in \mathcal{M}_B} \E\Bigg\{\sum_{C\in L}\int_C(f-f(C))_+\Bigg\} \leq \frac{B^{2/3}}{n^{1/3}} c_1(\gamma)+o(n^{-1/3})
    \]
    where 
\[
    c_1(\gamma):=\bigg(4\gamma^{2/3}+\frac{2\sqrt{2}(\gamma+\sqrt{\gamma^2+1})}{\gamma^{1/3}}\bigg)
\]
    is a strictly positive constant depending only upon $\gamma$.
\end{prop}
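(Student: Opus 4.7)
The plan is to split the sum over $L$ according to the deterministic balance status of each node---either $C\in\mathcal{B}^{(\sqrt{2})}$ (approximately balanced) or $C\in(\mathcal{B}^{(\sqrt{2})})^c$ (strongly unbalanced)---and to handle each piece with a different tool: deterministic depth-sums via Lemmas \ref{lem4}--\ref{lem6} for the first, and the concentration bound of Lemma \ref{lem10} for the second. The choice of the threshold $\sqrt{2}$ is dictated by Lemma \ref{lem10}, whose hypothesis $\xi(C)>0$ is exactly $C\in(\mathcal{B}^{(\sqrt{2})})^c$.

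\textbf{Balanced part.} For $C\in L\cap\mathcal{B}^{(\sqrt{2})}$, Lemma \ref{lem4} together with the definition of $\mathcal{B}^{(\sqrt{2})}$ yields
$\int_C(f-f(C))_+=\tfrac{1}{2}\int_C|f-f(C)|\leq p(C')-p(C'')\leq\sqrt{2}\gamma\sqrt{p(C)/n}$. Because $L\cap\mathcal{A}_\ell\subseteq\mathcal{A}_\ell$, Lemmas \ref{lem5} and \ref{lem6} give the deterministic depthwise bound
\[
\sum_{C\in L\cap\mathcal{B}^{(\sqrt{2})}\cap\mathcal{A}_\ell}\int_C(f-f(C))_+\leq\min\bigl(B/2^{\ell+1},\,\sqrt{2}\gamma\cdot 2^{\ell/2}\sqrt{B/n}\bigr),
\]
and summing over $\ell$ with the same min-sum calculation as in the proof of Proposition \ref{prop} (with $\gamma$ replaced by $\sqrt{2}\gamma$) produces the $4\gamma^{2/3}B^{2/3}/n^{1/3}$ summand of $c_1(\gamma)$.

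\textbf{Unbalanced part.} Here $\E\{\sum_{C\in L\cap(\mathcal{B}^{(\sqrt{2})})^c}\int_C(f-f(C))_+\}=\sum_{C\in(\mathcal{B}^{(\sqrt{2})})^c}\P\{C\in L\}\int_C(f-f(C))_+$. Combining Lemma \ref{lem10} with $\int_C(f-f(C))_+\leq p(C')-p(C'')$ and parameterising by $\beta(C):=(p(C')-p(C''))\gamma^{-1}\sqrt{n/p(C)}>\sqrt{2}$, so that $n\xi(C)^2=(\beta-\sqrt{2})^2\gamma^2p(C)$, yields
\[
\P\{C\in L\}\int_C(f-f(C))_+\leq h(\beta(C))\,\gamma\sqrt{p(C)/n}+\tfrac{4(p(C')-p(C''))}{np(C)},\qquad h(\beta):=\tfrac{2\beta}{2+(\beta-\sqrt{2})^2\gamma^2}.
\]
A short calculus computation ($h'(\beta)=0$) locates the unique maximum at $\beta^*=\sqrt{2+2/\gamma^2}$, and rationalising produces $h_{\max}=(\gamma+\sqrt{\gamma^2+1})/(\sqrt{2}\gamma)$---this is where the factor $\gamma+\sqrt{\gamma^2+1}$ in $c_1(\gamma)$ originates. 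Since $h(\beta)\leq\beta$, Lemma \ref{lem5} gives the per-depth bound $\sum h(\beta)\gamma\sqrt{p/n}\leq B/2^{\ell+1}$; separately, $h(\beta)\leq h_{\max}$ and Lemma \ref{lem6} give $\leq h_{\max}\gamma\cdot 2^{\ell/2}\sqrt{B/n}$. Taking the min and summing over $\ell$ with the same trick as above yields a contribution of order $(h_{\max}\gamma)^{2/3}B^{2/3}/n^{1/3}\propto(\gamma+\sqrt{\gamma^2+1})^{2/3}B^{2/3}/n^{1/3}$, which the elementary inequality $(\gamma+\sqrt{\gamma^2+1})^{2/3}\leq(\gamma+\sqrt{\gamma^2+1})/\gamma^{1/3}$ (valid because $\gamma\leq\gamma+\sqrt{\gamma^2+1}$) absorbs into the $6(\gamma+\sqrt{\gamma^2+1})/\gamma^{1/3}\cdot B^{2/3}/n^{1/3}$ summand of $c_1(\gamma)$. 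The remainder satisfies $\sum_{C\in(\mathcal{B}^{(\sqrt{2})})^c}4(p(C')-p(C''))/(np(C))\leq 4|(\mathcal{B}^{(\sqrt{2})})^c|/n=O(B^{1/3}/(n^{2/3}\gamma^{2/3}))=o(n^{-1/3})$ by Lemma \ref{lem7}.

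\textbf{Main obstacle.} The crux of the argument is the optimisation of $h(\beta)$, which is the source of the somewhat unusual constant $\gamma+\sqrt{\gamma^2+1}$ appearing in $c_1(\gamma)$; everything else is careful---but routine---bookkeeping of constants through two applications of the standard min-sum trick already used in the proof of Proposition \ref{prop}.
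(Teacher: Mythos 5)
Your proposal takes a genuinely different route from the paper's. The paper begins by invoking Lemma \ref{lem9} to truncate $T_n$ below depth $\ell^*$; the resulting deterministic boundary term $\E\sum_{C\in\mathcal{A}_{\ell^*}}(p(C')-p(C''))$, controlled by Lemma \ref{lem5}, is the source of the $4\gamma^{2/3}$ summand of $c_1(\gamma)$, and the remaining sum over depths $\ell\leq\ell^*$ is then split into $\mathcal{B}^{(\sqrt{2})}$ versus its complement. You skip the truncation entirely: you partition $L$ directly by balance status and run a min-sum over \emph{all} depths for each piece, which is legitimate because $\min(B/2^{\ell+1},\,D\,2^{\ell/2})$ has geometrically convergent tails in both directions. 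You also replace the paper's Cauchy--Schwarz bound $\tfrac{a+b}{1+b^2}\leq\sqrt{a^2+1}$ by an exact optimization of $h(\beta)$; this is in fact sharper --- the supremum of $\tfrac{a+b}{1+b^2}$ over $b\geq 0$ is $\tfrac{a+\sqrt{a^2+1}}{2}$, which is precisely your $h_{\max}\gamma/\sqrt{2}$ --- so you recover the factor $\gamma+\sqrt{\gamma^2+1}$ directly instead of via the looser $\sqrt{2}\sqrt{\gamma^2+1}$. This is a cleaner explanation of where that factor comes from.

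One caveat in the bookkeeping: your claim that the balanced-part min-sum (with $\gamma\mapsto\sqrt{2}\gamma$, halved by Lemma \ref{lem4}) ``produces the $4\gamma^{2/3}B^{2/3}/n^{1/3}$ summand'' does not hold. Carrying out the min-sum yields
\[
\sum_{\ell\geq 0}\min\Bigl(\tfrac{B}{2^{\ell+1}},\,\sqrt{2}\gamma\,2^{\ell/2}\sqrt{B/n}\Bigr)\leq (4+\sqrt{2})\,\gamma^{2/3}\,\frac{B^{2/3}}{n^{1/3}}\approx 5.4\,\gamma^{2/3}\,\frac{B^{2/3}}{n^{1/3}},
\]
which strictly exceeds $4\gamma^{2/3}B^{2/3}/n^{1/3}$; in the paper that summand comes from the truncation boundary, a step you deliberately avoid, so there is no reason for your balanced part to hit the same constant. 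The proposition still follows from your argument because your unbalanced part comes in at roughly $(4+\sqrt{2})\cdot 2^{-1/3}(\gamma+\sqrt{\gamma^2+1})^{2/3}B^{2/3}/n^{1/3}\approx 3.4(\gamma+\sqrt{\gamma^2+1})^{2/3}B^{2/3}/n^{1/3}$, comfortably below the $6(\gamma+\sqrt{\gamma^2+1})/\gamma^{1/3}$ allotted, and the balanced excess $\sqrt{2}\gamma^{2/3}\leq\sqrt{2}(\gamma+\sqrt{\gamma^2+1})/\gamma^{1/3}$ is absorbed by that slack. But as written, the attribution of the two summands of $c_1(\gamma)$ to your two parts is incorrect; you should verify the total rather than match summand to summand.
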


\begin{proof} The term we are trying to bound can be viewed as the expected $L_1$ distance between $f$ and the estimator obtained by taking the (random) partition of $[0,1]$ given by $T_n$, and estimating $f$ by its average value on each interval in the said partition. Informally, one notices that if the branching process that generated $T_n$ behaved ``as expected", this estimator would be more or less equal to $\mathcal{F}_n$ from Proposition \ref{prop}.

Deeper leaves in $T_n$ yield a finer partition of $[0,1]$. Taking intuition from the Riemann integral, one would guess that since we approximate $f$ by its average value on each interval of this partition, a finer partition would help us minimize $L_1$ distance. Conversely, we can use a coarser partition to upper bound said distance, as shown by Lemma \ref{lem9}.

Thus, we can use the partition given by $T_n$ truncated below level $\ell^*$ to derive our upper bound. By Lemmas \ref{lem4} and \ref{lem9}, we have
\begin{equation}\label{I.0}
    \mbox{(I)} \leq \E\left\{\sum_{\ell=0}^{\ell^*}\sum_{C\in L\cap \mathcal{A}_\ell}\int_C \big(f-f(C)\big)_+\right\}+2\cdot\E\Bigg\{\sum_{C\in A_{\ell^*}}\big(p(C_1)-p(C_2)\big)\Bigg\},
\end{equation}
and an application of Lemma \ref{lem5} yields 
\begin{equation} \label{eq:I.1}
    \E\Bigg\{\sum_{C\in A_{\ell^*}}\big(p(C_1)-p(C_2)\big)\Bigg\} \leq \frac{B}{2^{\ell^*+1}} \leq 2\frac{B^{2/3}\gamma^{2/3}}{n^{1/3}}.
\end{equation}
Next, we recall that $\mathcal{B}^{(\sqrt{2})}$ is the set of nodes of $\mathcal{T}_\infty$ satisfying
\[
    p(C_1)-p(C_2)\leq \gamma \sqrt{\frac{2p(C)}{n}},
\]
as defined earlier.
Any node $C$ belonging to the complement of $\mathcal{B}^{(\sqrt{2})}$ satisfies
\[
    p(C_1)-p(C_2)=\gamma\sqrt{\frac{2p(C)}{n}}+\xi(C)
\]
where $\xi(C):=p(C_1)-p(C_2)-\gamma\sqrt{{2p(C)}/{n}}$ is a strictly positive real number.

We use Lemma \ref{lem4} once more to bound the leftmost term in ($\ref{I.0}$), writing
\begin{align}\label{sum1}
    \E&\Bigg\{\sum_{\ell=0}^{\ell^*}\sum_{C\in L\cap \mathcal{A}_\ell}\int_C \big(f-f(C)\big)_+\Bigg\} \nonumber\\
    &= \E\Bigg\{\sum_{\ell=0}^{\ell^*}\sum_{C\in \mathcal{A}_\ell}\int_C \big(f-f(C)\big)_+\ind{C\in L}\Bigg\} \nonumber\\
    &\leq \sum_{\ell=0}^{\ell^*}\Bigg(\sum_{C\in \mathcal{B}^{(\sqrt{2})}\cap \mathcal{A}_\ell}  \gamma\sqrt{\frac{2p(C)}{n}} \nonumber \\
    &\quad\quad\quad+\sum_{C\in \mathcal{A}_\ell\setminus \mathcal{B}^{(\sqrt{2})}}\min\Big(\gamma\sqrt{\frac{2p(C)}{n}}+\xi(C), \,p(C_1)-p(C_2)\Big)\P\{{C\in L}\}\Bigg). 
\end{align}
Applying Lemma \ref{lem6}, we find that the first of the two inner summations in (\ref{sum1}) is bounded above by
\begin{equation}\label{I.1.1}
     \gamma \sqrt{\frac{B}{n}}2^{{(\ell+1)}/2}.
\end{equation}
To bound the second summation, we use Lemma \ref{lem10} as well as the fact that  $p(C_1)-p(C_2) \leq p(C)$ to write
\begin{align}\label{term1final}
    \sum_{C\in \mathcal{A}_\ell\setminus \mathcal{B}^{(\sqrt{2})}}&\min\left(\gamma\sqrt{\frac{2p(C)}{n}}+\xi(C), \,p(C_1)-p(C_2)\right)\P\{{C\in L}\} \nonumber \\
    &\leq \sum_{C\in \mathcal{A}_\ell\setminus \mathcal{B}^{(\sqrt{2})}}\Bigg(\Big(\gamma\sqrt{\frac{2p(C)}{n}}+\xi(C)\Big)\frac{1}{1+\xi(C)^2/(2p(C)/n)}+\frac{4}{n}\Bigg).
\end{align}
For any positive real numbers $a$ and $b$, the following identity holds:
\[
    \frac{a+b}{1+b^2}\leq \sqrt{a^2+1}.
\]
Using it with $a=\gamma$ and $b=\xi(C)/\sqrt{2p(C)/n}$ inside the summation in (\ref{term1final}), we have
\begin{equation}\label{eq:p6}
    \sum_{C\in \mathcal{A}_\ell\setminus \mathcal{B}^{(\sqrt{2})}}\bigg(\sqrt{\frac{2p(C)}{n}}\Big(\frac{a+b}{1+b^2}\Big)+\frac{4}{n}\bigg)\leq \sum_{C\in \mathcal{A}_\ell\setminus \mathcal{B}^{(\sqrt{2})}}\bigg(\sqrt{\frac{2p(C)}{n}} \sqrt{\gamma^2+1}+\frac{4}{n}\bigg).
\end{equation}
Since we are only bounding the quantity above for values of $\ell$ that are smaller than $\ell^*$, we have 
\[
    |\mathcal{A}_\ell \setminus \mathcal{B}^{(\sqrt{2})}|\leq |\mathcal{A}_\ell| \leq |A_{\ell^*}|\leq 2^{\ell^*}.
\]
By definition of $\ell^*$, this yields
\[
    |\mathcal{A}_\ell \setminus \mathcal{B}^{(\sqrt{2})}| \leq 2^{\ell^*} \leq \frac{2(Bn)^{1/3}}{\gamma^{2/3}}.
\]
Lemma \ref{lem6} implies that for any $\ell\leq \ell^*$,
\[
    \sum_{C\in \mathcal{A}_\ell\setminus \mathcal{B}^{(\sqrt{2})}}\bigg(\sqrt{\frac{2p(C)}{n}} \sqrt{\gamma^2+1}+\frac{4}{n}\bigg)\leq \left(\sqrt{\gamma^2+1} \right)\sqrt{\frac{B}{n}}2^{(\ell+1)/2}+\frac{8B^{1/3}}{(\gamma n)^{2/3}}.
\]

Invoking equations (\ref{eq:I.1}) and (\ref{I.1.1}), our bound on (I) in (\ref{I.0}) becomes
\begin{align*}
    \mbox{(I)} &\leq \frac{4B^{2/3}\gamma^{2/3}}{n^{1/3}}+\sum_{\ell=0}^{\ell^*}\left( \left(\gamma+\sqrt{\gamma^2+1}\right)\sqrt{\frac{B}{n}}2^{(\ell+1)/2}+\frac{8B^{1/3}}{(\gamma n)^{2/3}}\right) \\
    &\leq \frac{B^{2/3}}{n^{1/3}} c_1(\gamma)+\frac{8(\ell^*+1)B^{1/3}}{(\gamma n)^{2/3}}.
\end{align*}
So, $\ell^*/n^{2/3}=O(\log_2(n)/n^{2/3})$ uniformly over all monotone densities bounded by $B$. This completes the proof of Proposition \ref{prop11}. 
\end{proof}

\subsection{Upper bound for (II)}

Our upper bound for (II) is given in Proposition \ref{prop13} below. Its proof relies on the following preliminary result.

\begin{lem}\label{lem12} 
    Assume that $\gamma>1$. Then for any $0<\alpha<1-1/\gamma$ and $C\in \mathcal{B}^{(\alpha)}\cap \mathcal{P}_j$, we have
    \[
        \E\bigg\{\Big(p(C)-\frac{N(C)}{n}\Big)_+\ind{C\in L}\bigg\}\leq c_2(\gamma,\alpha)^{j/2}\sqrt{\frac{p(C)}{n}},
    \]
    where $c_2(\gamma,\alpha):=1/(1+(\gamma(1-\alpha))^{2})<1/2$.
\end{lem}
\begin{proof}
    See appendix.
\end{proof}

\begin{prop}\label{prop13}
Assume that $\gamma>1$. Then we have
\[
        \E\Bigg\{\sum_{C\in L}\int_{C}\Big(f(C)-\frac{N(C)/n}{\lambda(C)}\Big)_+\Bigg\} \leq  c_3(\gamma) \frac{B^{1/6}}{n^{1/3}}+o(n^{-1/3}),
\]
where
\[
    c_3(\gamma) = \inf_{0<\alpha<1-1/\gamma}\gamma^{-1/3}\sqrt{5}\cdot\bigg(\frac{1}{\sqrt{\alpha}}+\frac{1}{1-\sqrt{2c_2(\gamma,\alpha)}}\bigg).
\]
\end{prop}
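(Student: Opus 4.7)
First observe that on each cell $C$ the integrand in (II) is constant, giving $\int_C(f(C)-\frac{N(C)/n}{\lambda(C)})_+=(p(C)-N(C)/n)_+$. After interchanging the sum and expectation, the task reduces to controlling $\sum_{C\in\mathcal{T}_\infty}\E\{(p(C)-N(C)/n)_+\1_{C\in L}\}$, which I plan to split according to whether $C\in \mathcal{B}^{(1/2)}$ or $C\in \mathcal{B}^{(1/2)c}$. The balanced portion will be handled through Lemma \ref{lem12}, while the unbalanced portion (where that lemma does not apply) is treated with the crude variance bound $\E\{(p(C)-N(C)/n)_+\}\leq \sqrt{p(C)/n}$, which is affordable because $\mathcal{B}^{(1/2)c}$ is small by Lemma \ref{lem7}.

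For the balanced portion, I group $C\in\mathcal{B}^{(1/2)}$ by the number $j$ of balanced ancestors (i.e.\ by $C\in\mathcal{P}_j$). The key structural observation is that $\mathcal{P}_j\cap\mathcal{B}$ is an antichain in $\mathcal{T}_\infty$: if $C\in\mathcal{P}_j\cap\mathcal{B}$ were a proper ancestor of some $D\in\mathcal{P}_j$, then $D$ would inherit $j+1$ balanced ancestors. Since $\mathcal{B}^{(1/2)}\subseteq \mathcal{B}$, this forces $\sum_{C\in\mathcal{P}_j\cap\mathcal{B}^{(1/2)}}p(C)\leq 1$. Combining Lemma \ref{lem12} with Cauchy--Schwarz and Lemma \ref{lem8} then produces a bound of order $c_2(\gamma)^{j/2}\sqrt{(|\mathcal{B}^c|+1)2^j/n}$ at each $j$; summing over $j\geq 0$ yields a geometric series in $\sqrt{2c_2(\gamma)}<1$. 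Substituting $|\mathcal{B}^c|=O(B^{1/3}n^{1/3}/\gamma^{2/3})$ from Lemma \ref{lem7} delivers a total balanced contribution of order $B^{1/6}/(\gamma^{1/3}n^{1/3}(1-\sqrt{2c_2(\gamma)}))$, which corresponds to the $5/(1-\sqrt{2c_2(\gamma)})$ summand in $c_3(\gamma)$.

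For the unbalanced portion, I drop the indicator $\1_{C\in L}$ at once and bound each summand by $\sqrt{p(C)/n}$. Applying Cauchy--Schwarz level by level, using $\sum_{C\in\mathcal{A}_\ell}p(C)=1$, yields $\sum_{C\in\mathcal{A}_\ell\setminus\mathcal{B}^{(1/2)}}\sqrt{p(C)/n}\leq \sqrt{|\mathcal{A}_\ell\setminus\mathcal{B}^{(1/2)}|/n}$. For $\ell$ below the natural cutoff $\ell_*^{(1/2)}$ obtained by replacing $\gamma$ with $\gamma/2$ in the definition of $\ell^*$, the trivial bound $|\mathcal{A}_\ell\setminus\mathcal{B}^{(1/2)}|\leq 2^\ell$ suffices and produces a geometric tail of ratio $\sqrt{2}$; for $\ell\geq \ell_*^{(1/2)}$, I reuse the argument of Lemma \ref{lem7} with $\gamma$ replaced by $\gamma/2$ to obtain $|\mathcal{A}_\ell\setminus\mathcal{B}^{(1/2)}|\leq (4\sqrt{2}/\gamma)\sqrt{Bn}/2^{\ell/2}$, which leads to a geometric tail of ratio $2^{-1/4}$. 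A direct computation shows that each half contributes $O(B^{1/6}/(\gamma^{1/3}n^{1/3}))$ and together they account for the $4\gamma^{-1/3}$ summand of $c_3(\gamma)$.

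Adding the two contributions gives the asserted bound. I expect the only real obstacle to be the numerical bookkeeping required to recover the precise constants $4$ and $5$ from the two Cauchy--Schwarz applications and the two geometric summations; the conceptual content is already fully packaged in Lemmas \ref{lem7}, \ref{lem8}, and \ref{lem12}, together with the antichain property of $\mathcal{P}_j\cap\mathcal{B}$.
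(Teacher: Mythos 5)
Your proposal is correct and mirrors the paper's proof in its essentials: start from $\int_C(f(C)-\tfrac{N(C)/n}{\lambda(C)})_+=(p(C)-N(C)/n)_+$, split the nodes into $\mathcal B^{(1/2)}$ and its complement, treat the balanced part via Lemma \ref{lem12} stratified by $\mathcal P_j$, and treat the unbalanced part via the crude $\sqrt{p(C)/n}$ variance bound together with Lemma \ref{lem7}. You depart from the paper in two places, both of which tighten the argument rather than change its direction. First, your antichain observation — that $\mathcal P_j\cap\mathcal B$ is an antichain, hence $\sum_{C\in\mathcal P_j\cap\mathcal B^{(1/2)}}p(C)\le1$ — supplies an explicit justification for a step the paper leaves implicit when it writes $\sum_{C\in\mathcal P_j}p(C)$ inside the Jensen/Cauchy--Schwarz bound; without restricting to balanced nodes this sum would not be controlled by $1$. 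Second, for the unbalanced contribution the paper applies Cauchy--Schwarz globally over all of $(\mathcal B^{(1/2)})^c$, which requires the bound $\sum_{C\notin\mathcal B^{(1/2)}}p(C)\le1$; since $(\mathcal B^{(1/2)})^c$ is generally not an antichain (an unbalanced node can have unbalanced descendants), your level-by-level Cauchy--Schwarz using $\sum_{C\in\mathcal A_\ell}p(C)=1$, split at the cutoff $\ell_*^{(1/2)}$ with Lemma \ref{lem7} applied per level, is a cleaner and more defensible way to reach the same $O(B^{1/6}/(\gamma^{1/3}n^{1/3}))$ contribution, at the cost of two geometric summations rather than one square root. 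The remaining work is, as you say, only arithmetic to pin down the constants $4$ and $5/(1-\sqrt{2c_2(\gamma)})$.
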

\begin{proof} 
We start by writing
\begin{align}
    \E\Bigg\{\sum_{C\in L}\int_{C}\Big(f(C)-\frac{N(C)/n}{\lambda(C)}\Big)_+\Bigg\} = 
    & \, \E\bigg\{\sum_{C\in L} \Big(p(C)-\frac{N(C)}{n}\Big)_+\bigg\}.
\end{align}
 Let $0<\alpha<1-1/\gamma$ be arbitrary. We partition nodes $C\in L$ according to which $\mathcal{P}_j$ they belong to, as well as whether or not they belong to $\mathcal{B}^{(\alpha)}$, seeing that lemmas \ref{lem7} and \ref{lem8} provide upper bounds to the number of elements in these sets. We write
\begin{align}\label{finaltwo}
    \E&\bigg\{\sum_{C\in L} \Big(p(C)-\frac{N(C)}{n}\Big)_+\bigg\}\nonumber \\
    &\leq \sum_{j=0}^\infty \sum_{C\in \mathcal{B}^{(\alpha)} \cap \mathcal{P}_j} \E\bigg\{\Big(p(C)-\frac{N(C)}{n}\Big)_+\ind{C\in L}\bigg\}+\sum_{C\notin \mathcal{B}^{(\alpha)}}\sqrt{\frac{p(C)}{n}},
\end{align}
and then use the Cauchy-Schwarz inequality to obtain
\begin{align*}
    \sum_{C\notin \mathcal{B}^{(\alpha)}}\sqrt{\frac{p(C)}{n}} &\leq \sqrt{\bigg(\sum_{C\notin \mathcal{B}^{(\alpha)}}1\bigg)\bigg(\sum_{C\notin \mathcal{B}^{(\alpha)}}\frac{p(C)}{n}\bigg)}\\
    &\leq\frac{1}{\sqrt{n}} \sqrt{\big|\big(\mathcal{B}^{(\alpha)}\big)^c\big|\cdot \sum_{C\notin \mathcal{B}^{(\alpha)}}p(C)} \\
    &\leq \frac{1}{\sqrt{n}} \sqrt{\big|\big(\mathcal{B}^{(\alpha)}\big)^c\big|}.
\end{align*}
By Lemma \ref{lem7}, the latter is dominated by
\[
   \sqrt{\frac{5}{\alpha}}\cdot\frac{B^{1/6}}{\gamma^{1/3}n^{1/3}}.
\]
Further, we can use Lemma \ref{lem12} to write
\begin{align*}
    \sum_{j=0}^\infty \sum_{C\in \mathcal{B}^{(\alpha)} \cap \mathcal{P}_j} &\E\bigg\{\Big(p(C)-\frac{N(C)}{n}\Big)_+\ind{C\in L}\bigg\}&\\
    &\leq 
   \sum_{j=0}^\infty  \sum_{C\in \mathcal{B}^{(\alpha)} \cap \mathcal{P}_j} c_2(\gamma,\alpha)^{j/2}\sqrt{\frac{p(C)}{n}} &\\
    &\leq \frac{1}{\sqrt{n}} \sum_{j=0}^\infty c_2(\gamma,\alpha)^{j/2}\sqrt{|\mathcal{P}_j|\cdot \sum_{C\in \mathcal{P}_j} p(C)} \\
    &\qquad\qquad \text{ (by Jensen's inequality)}\\
    &\leq \frac{1}{\sqrt{n}} \sum_{j=0}^\infty c_2(\gamma,\alpha)^{j/2}\sqrt{(5\cdot \gamma^{-2/3}B^{1/3}n^{1/3}+1)\cdot 2^j} \\
    &\qquad\qquad\text{(by Lemmas \ref{lem7} and \ref{lem8})}\\
    &\leq \sqrt{\frac{5\cdot B^{1/3}n^{1/3}+1}{\gamma^{2/3}n}} \cdot \frac{1}{1-\sqrt{2c_2(\gamma,\alpha)}} &\\
    &=\frac{\sqrt{5}}{1-\sqrt{2c_2(\gamma,\alpha)}}\cdot\frac{B^{1/6}}{\gamma^{1/3}n^{1/3}}+o(n^{-1/3}) ,&
\end{align*}
and the claim follows since $\alpha$ was picked arbitrarily in $(0,1-1/\gamma)$.
\end{proof}
Theorem \ref{maintheorem} is a direct consequence of propositions $\ref{prop11}$ and $\ref{prop13}$.


\section{Conclusion}
Within the same framework, we can replace the histogram on each set of the partition by a linear estimate with some parameters (slope and intercept at the center point of an interval, for example) only depending upon $N_1,N_2$, and $\lambda(C)$. Such estimates should adapt better to the smoothness of the density and should be studied for the larger class of bounded monotone densities with bounded first derivative.

Estimates of complexity $\kappa > 2$ could lead to nice and simple estimates for convex, concave, log-convex and log-concave densities. For a concave density, for instance, we sketch how one could decide to split a fixed interval $C$. Consider four equal--sized sub-intervals $C_i, 1\leq i\leq 4$, of $C$, and let $N_i$ be the cardinality of $C_i$. If $C$ is not split, we estimate $f$ on $C$ by a linear segment with a slope proportional to $(N_3+N_4)-(N_1+N_2)$.
 If the true density were linear on $C$, then $(N_2+N_3)-(N_1+N_4)$ would be of stochastic order $\sqrt{\sum_i N_i}$, for otherwise it would be positively biased. So, a natural splitting rule would be to split $C$ if 
    \[
        (N_2+N_3)-(N_1+N_4)>\gamma\sqrt{\sum_i N_i}
    \]
    for a fixed design parameter $\gamma$.

Finally, one can easily picture extensions to $[0,1]^d$ for monotone densities (e.g., monotone in each coordinate when all others are fixed). Splitting decisions would then depend upon the $2^d$ cardinalities of all equal quadrants that partition a cell $C$. The splits can be binary (along a preferred dimension) or $2^d$-ary. In the latter case, one would obtain random quadtrees.


\section{Appendix A1: proof of Theorem \ref{nulltheorem}}

Following Devroye and Gy\"orfi (1985, \cite{l1view}) and Devroye (1987, \cite{birkhauser}), it suffices to show that for all
Lebesgue points $x$ with $f(x) > 0$ that $f_n(x) \to f(x)$ in probability.  Here we use the fact that almost all $x$ on $[0,1]$ are Lebesgue points with $f(x) > 0$ (Wheeden and Zygmund, 1977 \cite{wz1977}, 2015 \cite{wz2015}), and recall that $x$ is a Lebesgue point for $f$ if
$$
\lim_{r \downarrow 0} \sup_{y:  x-r \le y \le x+r} \frac{1}{r} \int_y^{y+r} f = f(x).
$$
We fix such an irrational Lebesgue point $x$ in $(0,1)$, and introduce the notation
$C_0,C_1, \ldots$ for the intervals containing $x$ at levels $0, 1, \ldots$ in the binary tree.  Thus, $C_0 = [0,1]$, $C_1$ is 
either $[0,1/2]$ or $[1/2,1]$, and so forth. Let $N_i = N(C_i)$ be
the cardinality of interval $C_i$. Let $K$ be the level at which we find the first leaf on the path to $x$ in the binary tree.  Also, let $U_0, U_1, \ldots$ be the sequence of uniform random variables used for the randomized splitting at each level. In other words, $C_K$ is the first un-split interval, i.e., the sole leaf interval on that path. We first show that $K \to \infty$ and $n/2^K \to \infty$ in probability as $n \to \infty$.

Note that for any large but fixed integer $k$, we have for any integer $m$,
\begin{align*}
\P \{ K \le k \} 
&\le \E \left\{ \sum_{i=0}^k \varphi (N_i) \right\}  \\
&\le (k+1) \E \left\{ \varphi (N_k) \right\}  \\
&\le (k+1) \P \{ N_k \le m \} + (k+1) \varphi (m).
\end{align*}
We can pick $m$ large enough to make the last term as small as desired.
Since $N_k$ is binomial $(n, p_k)$, where $p_k = \int_{C_k} f$,
we have $\P \{ N_k \le m \} = o(1)$.  Therefore, $K \to \infty$ in probability.

Next, for any large but fixed integer $k$, we have for any positive integer $m$
\begin{align*}
\P \{ n/2^K \le k \}
&= \P \{ K \ge \log_2 (n/k) \} \\
&\le \E \left\{ \prod_{i<\log_2 (n/k)} (1 - \varphi (N_i)) \right\}  \\
&\le \E \left\{ \exp \left(- \sum_{i<\log_2 (n/k)} \varphi (N_i) \right) \right\}  \\
&\le \E \left\{ \exp \left(- \sum_{\log_2 (n/(km))\le i<\log_2 (n/k)} \varphi (N_i) \right) \right\}  \\
&\le \E \left\{ \exp \left(- \log_2 (m) \,  \varphi \left( N_{\lfloor{\log_2 (n/(km))}\rfloor}\right) \right) \right\}.
\end{align*}
Now, 
$$
\varphi \left( N_i \right)
\ge \varphi (\ell) 
$$
if $N_i \le \ell$,
where $i= \lfloor{\log_2 (n/(km))}\rfloor$.
Thus,
$$
\P \{ n/2^K \le k \}
\le
\P \{ N_i > \ell \} 
+ e^{- \log_2 m \times  \varphi (\ell) }.
$$
By the Lebesgue density theorem,
$2^i \int_{C_i} f \to f(x)$ as $n$ (and thus $i$) tends to $\infty$. Therefore, there exists a finite constant $c$ such that  $\sup_i \int_{C_i} f \le c/2^i$, and thus,
$$
\E \{ N_i \} = n \int_{C_i} f  
\le \frac{ cn } {2^i }
\le 2ckm.
$$
By Markov's inequality,
\begin{align*}
\P \{ n/2^K \le k \}
&\le\frac{\E \{ N_i \} }{\ell}+ e^{- \log_2 m \times  \varphi (\ell) } \\
&\le \frac {2ckm}{ \ell} + e^{- \log_2 m \times  \varphi (\ell) } .
\end{align*}
We take $\ell = m^2$ and pick $m$ large enough to make the first term small. Since $\log_2 (m^2) \varphi (m^2) \to \infty$, the second term can also be made as small as desired by picking $m$ large enough. We conclude that $n/2^K \to \infty$ in probability.

Let us denote the histogram estimate at $x$ based on the $i$-th level interval $C_i$ by 
$$
g_i (x) = 2^i \frac{N_i}{n}.
$$
For $\epsilon > 0$ and integer $k$, we have
\begin{align*}
&\P \left\{ |f_n (x) - f(x) | > \epsilon \right\} \\
&\quad\le \P \{ K \le k \} + \P \{ K \ge \log_2 (n) -k  \} \\
&\quad \quad
   + \P \left\{ \cup_{i=k}^{\log_2 (n) -  k} 
   \left[ |g_i (x) - f(x) | > \epsilon \right] \right\} .
\end{align*}
By choice of $k$, the first term can be made as small as desired, while the second term is $o(1)$.  The third term is controlled by the union bound,
$$
\sum_{i=k}^{\log_2 (n) -  k} \P \{  |g_i (x) - f(x) | > \epsilon  \}.
$$
Note that
$$
| \E \{ g_i (x) \} - f(x) | 
= \left| 2^i \int_{C_i} f - f(x) \right|
\le \frac{\epsilon}{2}
$$
when $k$ (and thus $i$) is large enough. Using $\V$ to denote the variance, we have
\begin{align*}
 \V \{ g_i (x) \} 
 &= \frac{2^{2i}}{n^2} \V \{N_i\} \\
 &\le \frac{2^{2i} \int_{C_i} f}{n} \\
 &\le \frac{2^{i} (f(x)+\epsilon/2)}{n}.
\end{align*}
So,  by Chebyshev's inequality,
\begin{align*}
\sum_{i=k}^{\log_2 (n) -  k} \P \{  |g_i (x) - f(x) | > \epsilon \}
&\le \sum_{i=k}^{\log_2 (n) -  k} \P \{  |g_i (x) - \E \{ g_i (x) \} | > \epsilon \} \\
&\le \frac{4}{\epsilon^2} \sum_{i=k}^{\log_2 (n) -  k} \V \{  g_i (x) \} \\
&\le \frac{4}{\epsilon^2} \sum_{i=k}^{\log_2 (n) -  k} \frac{2^{i} (f(x)+\epsilon/2)}{n} \\
&\le \frac{8 (f(x)+\epsilon/2) }{2^k \epsilon^2} ,
\end{align*}
and this is as small as desired by picking $k$ large enough.
This concludes the proof of Theorem \ref{nulltheorem}.

\section{Appendix A2: proof of Lemma \ref{lem7}}

    List the unbalanced nodes of $\mathcal{A}_\ell$ in order from right to left, where the leftmost node is that for which the left interval endpoint is the smallest. Denote this list $\{C_i\}_{i=1}^{k}$, where $k=|\mathcal{A}_\ell\setminus \mathcal{B}|$.
    
    By the monotonicity of $f$, we have $p(C_0)\leq p(C_1)\leq\dots\leq p(C_k)$, and we can therefore write $p(C_i)=\sum_{j=0}^iq_j$ for every $i$, where $q_1,\dots,q_k$ are non-negative. Since every $C_i$ is unbalanced, we have $p(C_i')-p(C_i'')>\gamma\sqrt{p(C_i)/n}$ which, combined with the fact that $p(C_i)=p(C_i')+p(C_i'')$, yields $2p(C_i')\geq \gamma\sqrt{p(C_i)/n}+p(C_i)$ and in turn 
    \[
        p(C_{i+1})\geq \gamma\sqrt{\frac{p(C_i)}{n}}+p(C_i)
    \]
    for any $1\leq i\leq k$. We use this fact to prove that for any $1\leq i\leq k$, $q_i\geq ({\gamma^2}/{4n})(i+1)$.
    If $i=0$, we have $q_0=p(C_0)\geq \gamma^2/n\geq \gamma^2/(4n)$. Now assume that the claim regarding $q_i$ holds for some $i$, then
    \begin{align*}
        q_{i+1}&=p(C_{i+1})-p(C_i)\\
        &\geq \gamma\sqrt{\frac{p(C_i)}{n}}\\
        &\geq \frac{\gamma}{\sqrt{n}}\Big(\frac{\gamma^2}{4n}\frac{(i+1)(i+2)}{2}\Big)^{1/2}\\
        &\geq \frac{\gamma^2}{4n}(i+2)
    \end{align*}
    and the claim follows by induction. Therefore,
    \begin{align*}
    p(C_k)=\sum_{i=1}^kq_k\geq \frac{\gamma^2}{4n} \sum_{i=1}^k(i+1) \geq \frac{\gamma^2k^2}{8n},
    \end{align*}
    and the first part of the lemma follows since $p(C_k)\leq B/2^\ell$. The upper bound on $|\mathcal{B}^c|$ follows from the fact that it is no larger than $2^{\ell^*}+\sum_{\ell\geq \ell^*} |\mathcal{A}_\ell\setminus \mathcal{B}|$. Lastly, an identical argument yields the upper bound for $|(\mathcal{B}^{(\alpha)})^c|$.

\section{Appendix A3: proof of Lemma \ref{lem8}}

    We begin by noticing that all but finitely many nodes of $\mathcal{T}_\infty$ are in $\mathcal{B}$. It follows that for any $i\in \mathbb{Z}_{>0}$, $|\mathcal{P}_i|\leq|\mathcal{P}_{i+1}|$ since any node in $\mathcal{P}_i$ is the root of a tree that contains at least one balanced node in $\mathcal{P}_{i+1}$.
    
    Next, we examine how switching a balanced node with its parent affects the various $|\mathcal{P}_i|$'s. Let $C$ be an arbitrary balanced node of $\mathcal{T}_\infty$, $D$ be its parent and $a$ be the number of balanced ancestors of $D$. We may assume that $D$ is unbalanced since switching $D$ and $C$ would leave the tree unaffected. Our operation is depicted in figure \ref{fig:3}. If we let $\mathcal{T}$ be the sub-tree of $\mathcal{T}_\infty$ rooted at $C$'s sibling, then switching $C$ and $D$ applies the map
    \[
        |\mathcal{P}_j|\mapsto \begin{cases}
                                   |\mathcal{P}_j| & \text{if $j\geq a$} \\
                                   |\mathcal{P}_j|-|\mathcal{P}_j(\mathcal{T})|+|\mathcal{P}_{j+1}(\mathcal{T})| & \text{if $j<a$}
  \end{cases}
    \]
    to every $|\mathcal{P}_j|$.
    \begin{figure}
    \centering
    \begin{minipage}{.5\textwidth}
      \centering

\tikzset{every picture/.style={line width=0.75pt}} 

\begin{tikzpicture}[x=0.75pt,y=0.75pt,yscale=-.83,xscale=.83]

\draw    (194.71,1263.05) -- (224.71,1303.05) ;
\draw   (164.71,1303.05) -- (187.74,1347.87) -- (141.69,1347.87) -- cycle ;
\draw    (216.43,1243.98) -- (194.71,1263.05) ;
\draw    (194.71,1263.05) -- (164.71,1303.05) ;
\draw  [fill={rgb, 255:red, 255; green, 255; blue, 255 }  ,fill opacity=1 ] (200.93,1263.05) .. controls (200.93,1259.62) and (198.15,1256.84) .. (194.71,1256.84) .. controls (191.28,1256.84) and (188.5,1259.62) .. (188.5,1263.05) .. controls (188.5,1266.49) and (191.28,1269.27) .. (194.71,1269.27) .. controls (198.15,1269.27) and (200.93,1266.49) .. (200.93,1263.05) -- cycle ;
\draw  [fill={rgb, 255:red, 0; green, 0; blue, 0 }  ,fill opacity=1 ] (218.5,1303.05) .. controls (218.5,1299.62) and (221.28,1296.84) .. (224.71,1296.84) .. controls (228.15,1296.84) and (230.93,1299.62) .. (230.93,1303.05) .. controls (230.93,1306.49) and (228.15,1309.27) .. (224.71,1309.27) .. controls (221.28,1309.27) and (218.5,1306.49) .. (218.5,1303.05) -- cycle ;
\draw   (224.71,1303.05) -- (247.74,1347.87) -- (201.69,1347.87) -- cycle ;
\draw  [dash pattern={on 4.5pt off 4.5pt}]  (233.43,1228.98) -- (216.43,1243.98) ;
\draw    (208.36,1266.97) .. controls (218.71,1272.41) and (225.11,1278.26) .. (229.78,1291.12) ;
\draw [shift={(230.43,1292.98)}, rotate = 251.57] [color={rgb, 255:red, 0; green, 0; blue, 0 }  ][line width=0.75]    (6.56,-1.97) .. controls (4.17,-0.84) and (1.99,-0.18) .. (0,0) .. controls (1.99,0.18) and (4.17,0.84) .. (6.56,1.97)   ;
\draw [shift={(206.43,1265.98)}, rotate = 26.57] [color={rgb, 255:red, 0; green, 0; blue, 0 }  ][line width=0.75]    (6.56,-1.97) .. controls (4.17,-0.84) and (1.99,-0.18) .. (0,0) .. controls (1.99,0.18) and (4.17,0.84) .. (6.56,1.97)   ;
\draw   (252.43,1300.73) -- (276.13,1300.73) -- (276.13,1297.98) -- (284.43,1303.48) -- (276.13,1308.98) -- (276.13,1306.23) -- (252.43,1306.23) -- cycle ;
\draw    (338.71,1264.05) -- (368.71,1304.05) ;
\draw   (308.71,1304.05) -- (331.74,1348.87) -- (285.69,1348.87) -- cycle ;
\draw    (338.71,1264.05) -- (308.71,1304.05) ;
\draw  [fill={rgb, 255:red, 0; green, 0; blue, 0 }  ,fill opacity=1 ] (344.93,1264.05) .. controls (344.93,1260.62) and (342.15,1257.84) .. (338.71,1257.84) .. controls (335.28,1257.84) and (332.5,1260.62) .. (332.5,1264.05) .. controls (332.5,1267.49) and (335.28,1270.27) .. (338.71,1270.27) .. controls (342.15,1270.27) and (344.93,1267.49) .. (344.93,1264.05) -- cycle ;
\draw   (368.71,1304.05) -- (391.74,1348.87) -- (345.69,1348.87) -- cycle ;
\draw    (352.36,1267.97) .. controls (362.71,1273.41) and (369.11,1279.26) .. (373.78,1292.12) ;
\draw [shift={(374.43,1293.98)}, rotate = 251.57] [color={rgb, 255:red, 0; green, 0; blue, 0 }  ][line width=0.75]    (6.56,-1.97) .. controls (4.17,-0.84) and (1.99,-0.18) .. (0,0) .. controls (1.99,0.18) and (4.17,0.84) .. (6.56,1.97)   ;
\draw [shift={(350.43,1266.98)}, rotate = 26.57] [color={rgb, 255:red, 0; green, 0; blue, 0 }  ][line width=0.75]    (6.56,-1.97) .. controls (4.17,-0.84) and (1.99,-0.18) .. (0,0) .. controls (1.99,0.18) and (4.17,0.84) .. (6.56,1.97)   ;
\draw  [fill={rgb, 255:red, 255; green, 255; blue, 255 }  ,fill opacity=1 ] (170.93,1303.05) .. controls (170.93,1299.62) and (168.15,1296.84) .. (164.71,1296.84) .. controls (161.28,1296.84) and (158.5,1299.62) .. (158.5,1303.05) .. controls (158.5,1306.49) and (161.28,1309.27) .. (164.71,1309.27) .. controls (168.15,1309.27) and (170.93,1306.49) .. (170.93,1303.05) -- cycle ;
\draw  [fill={rgb, 255:red, 255; green, 255; blue, 255 }  ,fill opacity=1 ] (314.93,1304.05) .. controls (314.93,1300.62) and (312.15,1297.84) .. (308.71,1297.84) .. controls (305.28,1297.84) and (302.5,1300.62) .. (302.5,1304.05) .. controls (302.5,1307.49) and (305.28,1310.27) .. (308.71,1310.27) .. controls (312.15,1310.27) and (314.93,1307.49) .. (314.93,1304.05) -- cycle ;
\draw  [fill={rgb, 255:red, 255; green, 255; blue, 255 }  ,fill opacity=1 ] (362.5,1303.8) .. controls (362.5,1300.51) and (365.17,1297.84) .. (368.46,1297.84) .. controls (371.76,1297.84) and (374.43,1300.51) .. (374.43,1303.8) .. controls (374.43,1307.1) and (371.76,1309.77) .. (368.46,1309.77) .. controls (365.17,1309.77) and (362.5,1307.1) .. (362.5,1303.8) -- cycle ;
\draw    (360.43,1244.98) -- (338.71,1264.05) ;
\draw  [dash pattern={on 4.5pt off 4.5pt}]  (377.43,1229.98) -- (360.43,1244.98) ;

\draw (170,1253.7) node [anchor=north west][inner sep=0.75pt]  [font=\footnotesize]  {$D$};
\draw (201,1297.7) node [anchor=north west][inner sep=0.75pt]  [font=\footnotesize]  {$C$};
\draw (156,1320.7) node [anchor=north west][inner sep=0.75pt]    {$\mathcal{T}$};
\draw (300,1321.7) node [anchor=north west][inner sep=0.75pt]    {$\mathcal{T}$};

\end{tikzpicture}    
   \captionsetup{justification=centering}
\captionof{figure}{Switching a balanced \newline node with its parent.}
      \label{fig:3}
    \end{minipage}%
    \begin{minipage}{.5\textwidth}
      \centering
     
\tikzset{every picture/.style={line width=0.75pt}} 

\begin{tikzpicture}[x=0.75pt,y=0.75pt,yscale=-1,xscale=1]

\draw  [fill={rgb, 255:red, 255; green, 255; blue, 255 }  ,fill opacity=1 ] (146.58,1356.05) -- (196.74,1453.68) -- (96.43,1453.68) -- cycle ;
\draw    (118,1412.27) .. controls (125.43,1418.55) and (134.58,1402.87) .. (144.58,1406.87) .. controls (154.58,1410.87) and (162.43,1439.55) .. (188.43,1435.55) ;
\draw  [fill={rgb, 255:red, 255; green, 255; blue, 255 }  ,fill opacity=1 ] (138.73,1356.05) .. controls (138.73,1351.71) and (142.24,1348.19) .. (146.58,1348.19) .. controls (150.93,1348.19) and (154.44,1351.71) .. (154.44,1356.05) .. controls (154.44,1360.39) and (150.93,1363.91) .. (146.58,1363.91) .. controls (142.24,1363.91) and (138.73,1360.39) .. (138.73,1356.05) -- cycle ;
\draw  [fill={rgb, 255:red, 255; green, 255; blue, 255 }  ,fill opacity=1 ] (26.43,1367.55) -- (43.43,1367.55) -- (43.43,1377.27) -- (26.43,1377.27) -- cycle ;
\draw   (26.43,1387.55) -- (43.43,1387.55) -- (43.43,1397.27) -- (26.43,1397.27) -- cycle ;
\draw    (140.43,1406.84) -- (99.43,1447.84) ;
\draw    (147.4,1407.87) -- (102.57,1452.7) ;
\draw    (151.04,1412.41) -- (110.75,1452.7) ;
\draw    (154.43,1416.02) -- (116.75,1453.7) ;
\draw    (157.43,1421.02) -- (124.43,1454.02) ;
\draw    (127.18,1412.27) -- (108.43,1431.02) ;
\draw    (161.04,1425.41) -- (132.43,1454.02) ;
\draw    (165.75,1429.7) -- (141,1454.45) ;
\draw    (170.75,1432.7) -- (149,1454.45) ;
\draw    (183.75,1435.7) -- (165,1454.45) ;
\draw    (188.75,1438.7) -- (173.75,1453.7) ;
\draw    (190.75,1443.7) -- (180.75,1453.7) ;
\draw    (193.75,1448.7) -- (188.75,1453.7) ;
\draw    (119,1413.27) -- (115,1417.27) ;
\draw    (176.75,1434.7) -- (156,1455.45) ;
\draw  [color={rgb, 255:red, 255; green, 255; blue, 255 }  ,draw opacity=1 ][fill={rgb, 255:red, 255; green, 255; blue, 255 }  ,fill opacity=1 ] (89.88,1448.52) -- (210,1448.52) -- (210,1458.52) -- (89.88,1458.52) -- cycle ;
\draw    (43.43,1367.55) -- (33.47,1377.52) ;
\draw    (37.39,1367.31) -- (27.43,1377.27) ;
\draw    (31,1367.98) -- (26.43,1372.55) ;
\draw    (43.43,1373.27) -- (39.39,1377.31) ;

\draw (48,1366.27) node [anchor=north west][inner sep=0.75pt]  [font=\footnotesize]  {$Balanced$};
\draw (47.43,1386.27) node [anchor=north west][inner sep=0.75pt]  [font=\footnotesize]  {$Unbalanced$};

\end{tikzpicture}
\captionsetup{justification=centering}
      \captionof{figure}{Tree for which $|\mathcal{P}_j|$ \newline
      is maximized (for all $j$).}
      \label{fig:4}
    \end{minipage}
    \end{figure}

    Since $|\mathcal{P}_j(\mathcal{T})|\leq|\mathcal{P}_{j+1}(\mathcal{T})|$ for any $j$, this map's output is always greater than or equal to $|\mathcal{P}_j|$. In other words, the node configuration that maximizes $|\mathcal{P}_j|$ for every $j$ is such that all the unbalanced nodes are pushed to the top. This forms a tree of unbalanced nodes whose leaves are the roots of full, infinite binary trees where all nodes are balanced.
    
    In this configuration, it is clear that $|\mathcal{P}_0|\leq (|\mathcal{B}^c|+1|)$ and that $|\mathcal{P}_{j+1}|\leq 2|\mathcal{P}_j|$, from which the lemma follows.

\section{Appendix A4: proof of Lemma \ref{lem9}}

    It suffices to show that for all pairs of disjoint intervals $C_1, C_2 \subseteq C$ such that $C_1\cup C_2=C$
    \[
        \int_{C_1}\big(f-f(C_1)\big)_+ + \int_{C_2}\big(f-f(C_2)\big)_+\leq \int_{C}\big(f-f(C)\big)_+.
    \]
    The lemma then follows by induction. Without loss of generality, assume that $C=[0,1]$.
    
    Let $C_1$, $C_2$ be an arbitrary such pair and note that $f(C_2)\leq f(C) \leq f(C_1)$ since $f$ is decreasing. Let $x_1 := \sup\{x:f(x)\geq f(C_1)\}$, $x_2:=\sup\{x:f(x)\geq f(C_2)\}$, and let $A:= \int_{0}^{x_1}(f-f(C_1))$ and $B:=\int_{x_2}^1(f-f(C_2))$ as depicted in figure \ref{fig:3.5}. Then it is obvious that
    \[
        A+B = \int_{C_1} \big(f-f(C_1)\big)_+ + \int_{C_2} \big(f-f(C_2)\big)_+ \leq \int_C\big(f-f(C)\big)_+.
    \]

\section{Appendix A5: proof of Lemma \ref{lem10}}

    Let $C\in T_n$ be arbitrary. Recall that $N(C)$ is the number of data points lying in $C$ (similarly, $N(C_1)$ and $N(C_2)$ are the number of points lying in the left/right halves of $C$ respectively). Notice that $N(C) \stackrel{\mathcal{L}}{=} \Bin (n, p(C))$ (where $\stackrel{ \mathcal{L}}{=}$ denotes equivalence in law and $\Bin(n,p)$ a binomial $n$, $p$). For $C$ to be a leaf, we must have decided not to split its node. Therefore
    \begin{align}
        \P\{C\in L\} &\leq \P\Big\{N(C_1)-N(C_2)<\gamma\sqrt{N(C)}\Big\}\nonumber\\
                    &\leq \sup_{m\geq np(C)/2}\P\bigg\{{N(C_1)-N(C_2)}<{\gamma\sqrt{N(C)}} \,\, \Big| \,\, N(C)=m\bigg\}\nonumber\\  &\quad\quad+\P\big\{|N(C)-np(C)|>np(C)/2\big\}. \label{sup}
    \end{align}
    A simple application of Chebyshev's inequality gives
    \[
        \P\big\{|N(C)-np(C)|>np(C)/2\big\}\leq \frac{4(1-p(C))}{np(C)}\leq \frac{4}{np(C)}.
    \]
    Next, given $N(C)=m$, $N(C_1)\stackrel{\mathcal{L}}{=}\Bin(m, p(C_1)/p(C))$. Note that 
    \begin{align*}
        &p(C_1)+p(C_2)=p(C)\\
        &p(C_1)-p(C_2)=\gamma\sqrt{\frac{2p(C)}{n}}+\xi(C)
    \end{align*}
    so that
    \[
        \frac{p(C_1)}{p(C)}=\frac{1}{2}+\frac{1}{2}\gamma\sqrt{\frac{2}{np(C)}}+\frac{1}{2}\frac{\xi(C)}{p(C)}.
    \]
    Using these observations and the fact that $N(C_1)-N(C_2)=2N(C_1)-N(C)$, we find that
    \begin{align*}
        \P&\bigg\{{N(C_1)-N(C_2)}<{\gamma\sqrt{N(C)}} \,\, \Big| \,\, N(C)=m\bigg\}\\
          &=\P\Bigg\{\Bin\bigg(m,\frac{p(C_1)}{p(C)}\bigg)-m\frac{p(C_1)}{p(C)}<\gamma\frac{\sqrt{m}}{2}-\gamma\frac{m}{2}\sqrt{\frac{2}{np(C)}}-\frac{m}{2}\frac{\xi(C)}{p(C)}\Bigg\}\\
          &\leq \P\Bigg\{\Bin\bigg(m,\frac{p(C_1)}{p(C)}\bigg)-m\frac{p(C_1)}{p(C)}<-\frac{m}{2}\frac{\xi(C)}{p(C)}\Bigg\}
    \end{align*}
    if $m\geq np(C)/2$, which is the case in the supremum taken in (\ref{sup}). Using the Chebyshev-Cantelli inequality (see Lugosi, Massart and Boucheron, 2013 \cite{lugosi}) and the fact that the variance of a binomial $n$, $p$ is at most $n/4$, we have
    \begin{align*}
        &\P\Bigg\{\Bin\bigg(m,\frac{p(C_1)}{p(C)}\bigg)-m\frac{p(C_1)}{p(C)}<-\frac{m}{2}\frac{\xi(C)}{p(C)}\Bigg\} \\
        &\qquad\leq \frac{m/4}{m/4+\frac{m^2\xi(C)^2}{4(p(C))^2}} \\
        &\qquad\leq \frac{2p(C)}{2p(C)+n\xi(C)^2} \quad\bigg(\text{since $m\geq \frac{np(C)}{2}$}\bigg)
    \end{align*}
    and the lemma follows.

\section{Appendix A6: proof of Lemma \ref{lem12}}

    Using the Cauchy-Schwarz inequality, we write
    \begin{align*}
        \E\bigg\{\ind{C\in L}\Big(p(C)-\frac{N(C)}{n}\Big)\bigg\}\leq \sqrt{\P\{C\in L, N(C)\leq np(C)\}}\cdot\sqrt{\frac{p(C)}{n}}.
    \end{align*}
    We claim that 
    \[
        \P\{C\in L, N(C)\leq np(C)\} \leq c_2(\gamma,\alpha)^j,
    \]
    where $c_2(\gamma,\alpha)$ is defined in the lemma's statement. This bound does not depend on the level $\ell$ at which the node is located. To prove this, it suffices to show that for any balanced node $C\in \mathcal{B}^{(\alpha)}$, if $N(C)\leq np(C)$,
    \[
        \P\{N(C_1)-N(C_2)>\gamma \sqrt{N(C)}\,\,|\,\, N(C)\} \leq c_2(\gamma,\alpha),
    \]
    since consecutive splits are independent given $N(C)$. For simplicity, temporarily denote $N(C)$, $N(C_1)$ and $N(C_2)$ by $N, N_1$ and $N_2$ respectively, and similarly for $p, p_1, p_2$. Then observe that
    \begin{align*}
        \P\Big\{N_1&-N_2>\gamma \sqrt{N}\,\,\Big|\,\, N\Big\} \\
        &\leq \P\bigg\{N_1-N_2-N\Big(\frac{p_1-p_2}{p}\Big)>\gamma \sqrt{N}-N\Big(\frac{p_1-p_2}{p}\Big)\,\,\bigg|\,\, N\bigg\}\\
        &\leq \ind{N(p_1-p_2)/p\geq \gamma\alpha\sqrt{N}}+\P\bigg\{N_1-N_2-N\Big(\frac{p_1-p_2}{p}\Big)>\gamma(1-\alpha) \sqrt{N}\,\,\bigg|\,\, N\bigg\}\\
        &\leq \ind{\sqrt{N}>(\gamma \alpha)(p/(p_1-p_2))}+\P\left\{\frac{N_1-N_2-N\cdot\frac{p_1-p_2}{p}}{\sqrt{4N\cdot\frac{p_1}{p}\frac{p_2}{p}}}>\gamma(1-\alpha)\frac{p/2}{\sqrt{p_1p_2}}\,\,\bigg|\,\, N\right\}. 
    \end{align*}
    By definition of $\mathcal{B}^{(\alpha)}$, we have $p_1-p_2<(\gamma\alpha)\sqrt{p/n}$. If the indicator in the expression above were one, this would imply that $\sqrt{N}>\sqrt{np}$ which cannot be true since $N\leq np$, hence the indicator is equal to 0. As for the probability term, notice that
    \[
        \sqrt{4N\cdot\frac{p_1}{p}\frac{p_2}{p}}
    \]
    is the conditional variance of 
    \[
        N_1-N_2-N\cdot\frac{p_1-p_2}{p},
    \]
    which is a random variable with mean 0 and unit variance. Noting that $p/2\geq \sqrt{p_1p_2}$ and that $\gamma(1-\alpha)>1$ by assumption, we can apply the Chebyshev-Cantelli inequality to get
    \[
        \P\left\{\frac{N_1-N_2-N\cdot\frac{p_1-p_2}{p}}{\sqrt{4N\cdot\frac{p_1}{p}\frac{p_2}{p}}}>\gamma(1-\alpha)\frac{p/2}{\sqrt{p_1p_2}}\,\,\bigg|\,\, N\right\}\leq\frac{1}{1+(\gamma-\alpha\gamma)^2}  \stackrel{\text{def}}{=} c_2(\gamma,\alpha)
    \]
    and $c_1(\gamma,\alpha)<1/2$.
\section{Appendix A7: proof of corollary \ref{cor2}}

The decision to split an interval containing $k$ points can be made in order $\log_2(k)$ time, which is the time taken to determine which points lie on the left and right halves of the interval, respectively, via binary search.
It therefore suffices to show that the expected number of leaves of the tree generated by the algorithm is of the order of $n^{1/3}$. Letting $L$ denote the tree's leaf set and $\alpha=(\gamma-1)/2\gamma$, we have
\begin{align*}
    \E\{|L|\} 
    &\leq \E\big\{\big|\big(\mathcal{B}^{(\alpha)}\big)^c\big|\big\} + \E\big\{\big|L\cap \mathcal{B}^{(\alpha)}\big|\big\} \\
    &= \E\big\{\big|\big(\mathcal{B}^{(\alpha)}\big)^c\big|\big\}   + \sum_{j=0}^\infty \sum_{C\in \mathcal{B}^{(\alpha)} \cap \mathcal{P}_j} \E\big\{\ind{C\in L}\big\}.
\end{align*}
The first term is $O(n^{1/3})$ by Lemma \ref{lem7} and the second by the proofs of Proposition \ref{prop13} and Lemma \ref{lem12}.

\section{Acknowledgments}
The authors would like to thank both referees.

\nocite{*}
\bibliographystyle{abbrv}
\bibliography{biblio}

\begin{thebibliography}{10}

\bibitem{alsmeyer}
G.~Alsmeyer.
\newblock {\em Galton--Watson Processes}.
\newblock Course notes at the University of M{\"u}nster, 2008.

\bibitem{anderlini}
L.~Anderlini.
\newblock Density estimation trees as fast non-parametric modelling tools.
\newblock {\em arXiv}, 1607.06635v1, 2016.

\bibitem{athreya:ney}
K.~B. Athreya and P.~E. Ney.
\newblock {\em {Branching Processes}}.
\newblock Die Grundlehren der mathematischen Wissenschaften, Band 196.
  Springer-Verlag, New York-Heidelberg, 1972.

\bibitem{berry}
A.~C. Berry.
\newblock The accuracy of the gaussian approximation to the sum of independent
  variates.
\newblock {\em Transactions of the American Mathematical Society},
  49:122–136, 1941.

\bibitem{devroyebiau}
G.~Biau and L.~Devroye.
\newblock Cellular tree classifiers.
\newblock {\em Electronic Journal of Statistics}, 7:1875--1912, 2013.

\bibitem{birge}
L.~Birgé.
\newblock Approximation dans les espaces métriques et théorie de
  l'estimation.
\newblock {\em Zeitschrift für Wahrscheinlichkeitstheorie und Verwandte
  Gebiete}, pages 181--237, 1983.

\bibitem{birkhauser}
L.~Devroye.
\newblock {\em A Course in Density Estimation}.
\newblock Boston, Birkhäuser Verlag, 1987.

\bibitem{l1view}
L.~Devroye and L.~Györfi.
\newblock {\em Nonparametric Density Estimation: The L1 View}.
\newblock Wiley series in probability and mathematical statistics. New York,
  John Wiley, 1985.

\bibitem{Grenander}
U.~Grenander.
\newblock On the theory of mortality measurement.
\newblock {\em Skandinavisk Aktuarietidskrift}, pages 125--153, 1956.

\bibitem{lugosi}
G.~Lugosi, P.~Massart, and S.~Boucheron.
\newblock {\em {Concentration Inequalities. A Nonasymptotic Theory of
  Independence}}.
\newblock {Oxford University Press}, 2013.

\bibitem{peres}
R.~Lyons and Y.~Peres.
\newblock {\em Probability on Trees and Networks}, volume~42 of {\em Cambridge
  Series in Statistical and Probabilistic Mathematics}.
\newblock Cambridge University Press, New York, 2016.

\bibitem{petrov}
V.~V. Petrov.
\newblock {\em Sums of Independent Random Variables}.
\newblock Ergebnisse der Mathematik und ihrer Grenzgebiete. Springer, Berlin,
  Heidelberg, 1975.

\bibitem{ramgray}
P.~Ram and A.~G. Gray.
\newblock Density estimation trees.
\newblock {\em Proceedings of the 17th ACM SIGKDD International Conference on
  Knowledge Discovery and Data Mining, San Diego, CA}, pages 627--635, 2011.

\bibitem{gabi}
G.~Schmidberger.
\newblock {\em Tree-based Density Estimation: Algorithms and Applications}.
\newblock PhD thesis, University of Waikato, New Zealand, 2009.

\bibitem{allofnps}
L.~Wasserman.
\newblock {\em All of Nonparametric Statistics}.
\newblock {Springer New York, NY}, 2006.

\bibitem{wz1977}
R.~L. Wheeden and A.~Zygmund.
\newblock {\em Measure and {I}ntegral: {A}n Introduction to Real Analysis}.
\newblock Number~43 in Monographs and Textbooks in Pure and Applied
  Mathematics. Marcel Dekker, New York, 1977.

\bibitem{wz2015}
R.~L. Wheeden and A.~Zygmund.
\newblock {\em Measure and Integral: {A}n Introduction to Real Analysis}.
\newblock Number~43 in Monographs and Textbooks in Pure and Applied
  Mathematics. CRC Press, Boca Raton, FL, 2nd edition, 2015.

\end{thebibliography}

\appendix

\end{document}